\newtheorem{theorem}{Theorem}[section]
\newtheorem{proposition}[theorem]{Proposition}
\newtheorem{corollary}[theorem]{Corollary}
\newtheorem*{urem}{Remark}
\newtheorem{lem}[theorem]{Lemma}
\newtheorem{claim}[theorem]{Claim}
\begin{document}

\title{On degree anti-Ramsey numbers}

\author{Shoni Gilboa \thanks{Mathematics Department, The Open University of Israel, Raanana 43107, Israel. Email: \texttt{tipshoni@gmail.com}}
\and
Dan Hefetz \thanks{Department of Computer Science, Hebrew University, Jerusalem 91904, Israel. Email: \texttt{danny.hefetz@gmail.com}.
      } 
}

\maketitle

\begin{abstract} 
The degree anti-Ramsey number $AR_d(H)$ of a graph $H$ is the smallest integer $k$ for which there exists a graph $G$ with maximum degree at most $k$ such that any proper edge colouring of $G$ yields a rainbow copy of $H$. In this paper we prove a general upper bound on degree anti-Ramsey numbers, determine the precise value of the degree anti-Ramsey number of any forest, and prove an upper bound on the degree anti-Ramsey numbers of cycles of any length which is best possible up to a multiplicative factor of $2$. Our proofs involve a variety of tools, including a classical result of Bollob\'as concerning cross intersecting families and a topological version of Hall's Theorem due to Aharoni, Berger and Meshulam.  
\end{abstract}

\begin{quote}
\textbf{Keywords:} Anti-Ramsey, Multicoloured, Rainbow. 
\end{quote}

\section{Introduction} \label{intro}
 
A copy of a graph $H$ in an edge coloured graph $G$ is called \emph{rainbow} if all edges of $H$ have distinct colours. The \emph{degree anti-Ramsey number} $AR_d(H)$ of a graph $H$ is the smallest integer $k$ for which there exists a graph $G$ with maximum degree at most $k$ such that any proper edge colouring of $G$ yields a rainbow copy of $H$. This notion, which is the focus of this paper, was introduced in~\cite{Alon}.  

Several versions of \emph{anti-Ramsey numbers} appear in the literature (see, e.g.~\cite{FMO} and the many references therein). The \emph{local anti-Ramsey number} $AR(H)$ of a graph $H$ is the smallest integer $n$ such that any proper edge colouring of $K_n$ yields a rainbow copy of $H$. This graph invariant was studied by various researchers, including Babai~\cite{Babai} and Alon, Lefmann and R\"odl~\cite{ALR}. As noted in~\cite{Alon}, it is evident that 
\begin{equation} \label{eq:local}
AR_d(H) \leq AR(H) - 1 \textrm{ holds for any graph } H. 
\end{equation}  

The \emph{size anti-Ramsey} number $AR_s(H)$ of a graph $H$ is the smallest integer $m$ for which there exists a graph $G$ with $m$ edges such that any proper edge colouring of $G$ yields a rainbow copy of $H$. This graph invariant was introduced by Axenovich, Knauer, Stumpp and Ueckerdt~\cite{AKSU} who proved upper and lower bounds on the size anti-Ramsey numbers of paths, cycles, matchings and cliques. In~\cite{Alon}, Alon proved that $AR_d(K_k) = \Theta(k^3/\log k)$ and used this result to prove that $AR_s(K_k) = \Omega(k^6/\log^2 k)$, thus settling a problem of Axenovich et al~\cite{AKSU}. 

It readily follows from~\eqref{eq:local} that any upper bound on $AR(H)$ immediately translates to an upper bound on $AR_d(H)$. One such upper bound was proved by Alon, Jiang, Miller and Pritikin in~\cite{Alon_et_al}. It was proved there that for every graph $H$,
\begin{equation} \label{eq:Alon_et_al}
AR(H) \leq 2 \Delta(H)^2 v(H) + 32 \Delta(H)^4 + 4 v(H).
\end{equation}
 
Our first result is the following improvement:  
 
\begin{theorem} \label{propos:general}
Let $H$ be a graph and let $k$ be its degeneracy. Then
$$
AR(H) \leq k e(H) - k + v(H) \,.
$$
\end{theorem}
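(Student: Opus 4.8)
The plan is to prove the equivalent statement that every proper edge colouring $c$ of the complete graph $K_n$ on $n = k\,e(H) - k + v(H)$ vertices contains a rainbow copy of $H$; this is precisely the assertion $AR(H) \le k\,e(H) - k + v(H)$. To exploit the degeneracy I would fix an ordering $v_1, \ldots, v_{v(H)}$ of $V(H)$ witnessing that $H$ is $k$-degenerate, so that for each $i$ the \emph{back-degree} $d_i := |\{j < i : v_j v_i \in E(H)\}|$ satisfies $d_i \le k$, with $d_1 = 0$ and $\sum_i d_i = e(H)$ (each edge is charged to its later endpoint). I would then build an embedding $\phi : V(H) \to V(K_n)$ greedily, processing $v_1, v_2, \ldots$ in this order and maintaining the invariant that the image embedded so far is both a genuine (injective) subgraph copy and rainbow.

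The heart of the argument is the inductive step. Having embedded $v_1, \ldots, v_{i-1}$, I must choose an image $w$ for $v_i$ that avoids the $i-1$ already-used vertices and is such that the $d_i$ edges joining $w$ to the images of the earlier neighbours of $v_i$ all receive fresh colours, distinct from the set $C_{<i}$ of colours already used on the partial copy. Here properness of $c$ helps twice for free: the $d_i$ new edges all share the vertex $w$, hence automatically carry pairwise distinct colours, so I only need each of them to avoid $C_{<i}$; and by the rainbow invariant $|C_{<i}| = |E_{<i}|$, the number of edges embedded so far. Moreover, for a fixed earlier neighbour image $x$ and a fixed colour $\gamma \in C_{<i}$, there is \emph{at most one} vertex $w$ with $c(wx) = \gamma$. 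Consequently the number of vertices $w$ forbidden because some back-edge $wx$ repeats an old colour is at most $d_i\,|E_{<i}|$.

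Combining the two sources of forbidden vertices, a valid image for $v_i$ exists as soon as $n \ge i + d_i\,|E_{<i}|$. The remaining, and most delicate, step is to check that the target value of $n$ dominates this quantity for every $i$ simultaneously. I would use $|E_{<i}| = e(H) - \sum_{j \ge i} d_j \le e(H) - d_i$ together with $i \le v(H)$, and then collapse the product via the chain $d_i\bigl(e(H) - d_i\bigr) \le k\,\bigl(e(H) - d_i\bigr) \le k\,\bigl(e(H) - 1\bigr)$, valid whenever $1 \le d_i \le k$ (the case $d_i = 0$ being immediate). This yields $i + d_i\,|E_{<i}| \le v(H) + k\,e(H) - k$ for all $i$, exactly the chosen value of $n$, so the greedy choice never gets stuck and the completed embedding is the desired rainbow $H$. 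I expect this final telescoping inequality — making the crude per-step product count collapse to a bound linear in $e(H)$ with leading coefficient precisely $k$ — to be the main obstacle, since it is the point at which the degeneracy hypothesis must be spent efficiently.
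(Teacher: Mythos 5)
Your proposal is correct and follows essentially the same route as the paper: the same degeneracy ordering, the same greedy rainbow embedding, the same use of properness to bound the forbidden vertices at each step by (back-degree)\,$\times$\,(edges already embedded), and the same telescoping inequality $d_i\bigl(e(H)-d_i\bigr) \le k\,e(H) - k$ to verify that $n = k\,e(H) - k + v(H)$ vertices suffice. No substantive differences.
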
 

Note that Theorem~\ref{propos:general} is indeed an improvement of~\eqref{eq:Alon_et_al}, since if $H$ is a $k$-degenerate graph, then
$$
k e(H) - k + v(H) \leq k^2 v(H) - k \binom{k+1}{2} - k + v(H) < 2 \Delta(H)^2 v(H) + 32 \Delta(H)^4 + 4 v(H).
$$

Using~\eqref{eq:local} we obtain the following immediate consequence of Theorem~\ref{propos:general}:
\begin{corollary} \label{cor:general}
Let $H$ be a graph and let $k$ be its degeneracy. Then
$$
AR_d(H) \leq k e(H) - k + v(H) - 1 \,.
$$
\end{corollary}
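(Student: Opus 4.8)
The corollary is immediate from Theorem~\ref{propos:general} together with the inequality~\eqref{eq:local}: combining $AR_d(H) \le AR(H) - 1$ with the bound $AR(H) \le ke(H) - k + v(H)$ yields exactly $AR_d(H) \le ke(H) - k + v(H) - 1$. Hence the real content lies in Theorem~\ref{propos:general}, and I describe how I would prove that. The plan is to fix an arbitrary proper edge colouring of $K_n$ for $n = ke(H) - k + v(H)$ and to construct a rainbow copy of $H$ greedily, embedding the vertices of $H$ one at a time in a degeneracy order.

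Concretely, I would fix a degeneracy ordering $v_1, \dots, v_p$ of $H$ (so $p = v(H)$) and, for each $i$, let $d_i$ denote the number of neighbours of $v_i$ among $v_1, \dots, v_{i-1}$; by the definition of degeneracy we have $d_i \le k$ for every $i$, and since each edge is counted once as a back-edge, $\sum_{i=1}^p d_i = e(H)$. I would then embed $v_1, \dots, v_p$ into distinct vertices $u_1, \dots, u_p$ of $K_n$, revealing at step $i$ precisely the $d_i$ edges joining $u_i$ to the images of the already-placed back-neighbours of $v_i$. The key observation is that all $d_i$ of these new edges are incident to the single vertex $u_i$, so properness of the colouring forces them to have pairwise distinct colours automatically; the only condition one must arrange by hand is that none of these new colours coincides with a colour already used by the partial embedding.

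To see that such a choice of $u_i$ always exists, suppose the partial embedding on $u_1, \dots, u_{i-1}$ is rainbow, so that it uses exactly $\sum_{j<i} d_j$ distinct colours. A vertex $w$ is forbidden as a choice for $u_i$ if it has already been used (at most $i - 1$ such vertices) or if, for some back-neighbour image $u_j$ and some already-used colour $c$, the edge $u_j w$ has colour $c$. Since the colouring is proper, each pair (back-neighbour image, used colour) rules out at most one vertex $w$, so the number of colour-forbidden vertices is at most $d_i \sum_{j<i} d_j$. Using $d_i \le k$ and $\sum_{j<i} d_j \le e(H) - d_i$, the crucial point is the linear (rather than quadratic) estimate
\[
d_i \sum_{j<i} d_j \le d_i \bigl(e(H) - d_i\bigr) \le k\bigl(e(H) - 1\bigr) = ke(H) - k,
\]
valid whenever $d_i \ge 1$ (and the entire term vanishes when $d_i = 0$). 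Hence the number of forbidden vertices is at most $(i-1) + (ke(H) - k) \le v(H) - 1 + ke(H) - k = n - 1$, so at least one admissible vertex $u_i$ remains and the greedy step goes through.

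The main obstacle is precisely this counting: a naive bound would let the number of forbidden vertices grow like $e(H)^2$, which is far too weak for a bound linear in $e(H)$. What rescues the argument is the combination of two features — properness of the colouring eliminates all clashes within the star at $u_i$ for free, and the degeneracy ordering keeps every back-degree $d_i$ bounded by $k$ — which together collapse the product $d_i \sum_{j<i} d_j$ to the linear bound $ke(H) - k$. Once this estimate is in place, iterating over $i = 1, \dots, p$ produces a rainbow copy of $H$ in $K_n$, establishing $AR(H) \le ke(H) - k + v(H)$ and hence Corollary~\ref{cor:general}.
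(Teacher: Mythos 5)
Your derivation of the corollary from Theorem~\ref{propos:general} via~\eqref{eq:local} is exactly what the paper does, and your proof of Theorem~\ref{propos:general} itself is essentially identical to the paper's: the same greedy embedding along a degeneracy ordering, the same use of properness to bound the colour-forbidden vertices by $|I_\ell|\cdot|E_{\ell-1}| \le |I_\ell|\bigl(e(H)-|I_\ell|\bigr) \le ke(H)-k$, and the same final count showing an admissible vertex survives. The argument is correct; no issues.
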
 

As observed in~\cite{Alon}, it readily follows from Vizing's Theorem~\cite{Vizing} that
\begin{equation} \label{eq:lower} 
AR_d(H) \geq e(H) - 1 \textrm{ holds for any graph } H.
\end{equation}
It was also observed in~\cite{Alon} that~\eqref{eq:lower} is tight whenever $H$ is a matching with at least $3$ edges (it is obvious that $AR_d(K_2) = 1$ and easy to see that $AR_d(2 K_2) = 2$). Moreover, it was noted in~\cite{Alon} that \eqref{eq:lower} is almost tight for forests, i.e., $AR_d(H) \leq e(H)$ whenever $H$ is a forest. Our next result determines the precise value of the degree anti-Ramsey number of every forest.

\begin{theorem} \label{thm:Forest} 
Let $F$ be a forest. Then $AR_d(F) = e(F) - 1$, unless $F$ is a star of any size or a matching with precisely two edges, in which case $AR_d(F) = e(F)$.
\end{theorem}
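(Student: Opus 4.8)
The plan is to prove the theorem in two directions. For the lower bound $AR_d(F) \geq e(F) - 1$, I can invoke equation~\eqref{eq:lower}, which holds for all graphs. So the real work is establishing the matching upper bound $AR_d(F) \leq e(F) - 1$ for every forest $F$ that is neither a star nor the two-edge matching, together with the exceptional computations $AR_d(F) = e(F)$ for stars and for $2K_2$.

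Let me think about what the upper bound requires. By definition, to show $AR_d(F) \leq k$ I must exhibit a single host graph $G$ with $\Delta(G) \leq k$ such that every proper edge colouring of $G$ contains a rainbow copy of $F$. Setting $k = e(F) - 1$, the plan is to build $G$ as a disjoint union of many copies of some gadget, or more cleverly as a large enough host, so that a counting or pigeonhole argument forces a rainbow $F$. The natural first idea: in any proper edge colouring, the edges at a single vertex already receive distinct colours, so any subgraph $F$ all of whose edges share a common vertex — i.e.\ a star — is automatically rainbow. This immediately gives the star case $AR_d(\text{star } S_m) \leq \Delta(\text{host})$; to embed a star with $m$ edges we simply need a vertex of degree $m$, so $AR_d(S_m) = e(S_m) = m$, and one checks the lower bound $e(F)$ rather than $e(F)-1$ holds here because a host with max degree $m-1$ cannot even contain $S_m$. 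This explains the star exception cleanly.

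For the generic case I would proceed by structural induction or by direct construction. The key observation is that a forest $F$ that is not a star has a ``good'' edge — for instance a leaf edge $uv$ (with $v$ the leaf) such that $F - v$ is still ``embeddable'' and such that the degree $\deg_F(u) \leq e(F) - 1$ leaves room. The plan is to take a host graph $G$ in which $F$ embeds with every vertex of $F$ mapped to a vertex of the required degree, and argue by induction on $e(F)$: given a proper colouring of $G$, first locate a rainbow copy of the smaller forest $F'$ obtained by deleting a suitable leaf, then extend it by attaching the deleted leaf via an edge whose colour avoids the $e(F')=e(F)-1$ colours already used. Since the attachment vertex has many available neighbours in $G$ (because we took $G$ large, e.g.\ a complete bipartite host or a union of enough copies), and a proper colouring uses each colour at most once per vertex, there are enough candidate edges to dodge the finitely many forbidden colours; this is where the bound $e(F)-1$ on the degree comes in, since we need at least $e(F)$ available edges at the extension vertex to guarantee one avoids the $e(F)-1$ used colours while respecting properness.

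The main obstacle I anticipate is twofold. First, making the extension argument work while keeping $\Delta(G) \leq e(F)-1$ rather than $e(F)$: I must ensure that the new edge can always be chosen both to avoid all previously used colours and to have an endpoint of sufficiently high degree, and this is precisely what fails for $2K_2$ — with only two disjoint edges the two ``extension'' steps cannot be decoupled, forcing $AR_d(2K_2)=2=e(2K_2)$ rather than $1$. So I would treat $2K_2$ as a separate base case, verifying directly that max degree $1$ (a perfect matching host) admits a proper colouring with no rainbow $2K_2$, while max degree $2$ suffices. Second, choosing the right host graph $G$ uniformly for all non-exceptional forests: I expect the cleanest choice is a sufficiently large number of disjoint copies of $F$ itself, or a blow-up thereof, so that pigeonhole across the copies guarantees one copy whose colouring is rainbow; formalising ``enough copies'' and checking the degree constraint is the technical heart of the argument.
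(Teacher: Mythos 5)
Your handling of the lower bound via~\eqref{eq:lower} and of the exceptional cases (stars, $2K_2$) is fine and matches the paper. However, the core of your plan for the upper bound --- locate a rainbow copy of $F$ minus a leaf and greedily extend it, inside a ``large enough'' host such as many disjoint copies of $F$, a blow-up, or a complete bipartite graph --- cannot work as stated, and the obstruction is precisely the content of~\eqref{eq:lower}. Any host $G$ with $\Delta(G)\le e(F)-1$ that is $(e(F)-1)$-edge-colourable (which holds for every forest host, every bipartite host, and, by Vizing, for every Class~1 host) admits a proper colouring using only $e(F)-1$ colours in total, and then \emph{no} subgraph with $e(F)$ edges is rainbow, no matter how many disjoint copies you take or how you order the extension steps. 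So pigeonhole over copies is hopeless; the host must be a Class~2 graph. This is the key idea you are missing: for a tree $T$ on $k+2$ vertices the paper takes a connected $k$-regular graph of girth at least $k+2$ that is \emph{not} $k$-edge-colourable (constructed in Proposition~\ref{prop::highGirthClass2}), and the embedding is not grown from an arbitrary partial rainbow copy but is anchored at an edge $u_1u_2$ with $N_c(u_2)\not\subseteq N_c(u_1)$, whose existence is exactly what non-$k$-edge-colourability plus connectivity provides (Lemma~\ref{lem:same_colours}); the girth condition is what makes the greedy counting close, since at each step fewer than $k$ colours are forbidden at a vertex of degree $k$ and no two already-embedded vertices can be joined by a short path.

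There is a second gap in the disconnected case. Your ``enough disjoint copies plus pigeonhole'' does not address the actual difficulty: one can find a rainbow copy of each component inside each host copy, but the colour set used by one component in copy $i$ may meet the colour set used by the other component in copy $j$ for every pair $i\ne j$, so no rainbow $F$ is assembled. The paper resolves this by first identifying a vertex of one tree $T$ with a vertex of the remaining forest $R$ (reducing the number of components for the induction), and then applying Bollob\'as's set-pair inequality (Theorem~\ref{thm::Bollobas}) to the colour sets $A_i,B_i$ of the two parts across $N=\binom{a+b}{a}+1$ disjoint copies of the inductive host, deriving $N\le\binom{a+b}{a}$, a contradiction. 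Without some mechanism of this kind for forcing two colour-disjoint components in distinct copies, your induction does not go through.
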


Finally, we study degree anti-Ramsey numbers of cycles. It readily follows from~\eqref{eq:lower} and Corollary~\ref{cor:general} that $k-1 \leq AR_d(C_k) \leq 3(k-1)$ holds for every $k \geq 3$. Our next result improves the upper bound.  

\begin{theorem} \label{thm:cycles}
For every $k \geq 3$,
$$
AR_d(C_k) \leq \begin{cases} 2(k-1) & \textrm{if } k \textrm{ is even} \\ 2(k+2) & \textrm{if } k \textrm{ is odd.} \end{cases}
$$
\end{theorem}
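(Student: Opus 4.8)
The plan is to exhibit, for each parity of $k$, an explicit host graph $G$ of the prescribed maximum degree and to prove that every proper edge colouring of $G$ contains a rainbow $C_k$. The natural candidate is the \emph{blow-up} of $C_k$: replace each vertex of the cycle by an independent set $U_1,\dots,U_k$ of common size $t$ and join $U_i$ to $U_{i+1}$ (indices read modulo $k$) by a complete bipartite graph. Every vertex then has degree exactly $2t$, so taking $t=k-1$ when $k$ is even and $t=k+2$ when $k$ is odd reproduces the two bounds in the statement. The first easy observation is that, because $C_k$ has girth $k$, every copy of $C_k$ in this blow-up is \emph{canonical}: it meets each $U_i$ in a single vertex $u_i$ and consists of the edges $u_iu_{i+1}$. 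Thus it suffices to select one vertex $u_i\in U_i$ per blob so that the $k$ resulting edge colours are pairwise distinct; the cyclic closure of the selection is automatic.

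To see why $t=k-1$ is the right order of magnitude, I would first run the naive greedy argument. Extend a rainbow path $u_1,u_2,\dots$ one blob at a time: having fixed $u_i$, the $t$ edges from $u_i$ into $U_{i+1}$ all carry distinct colours, since the colouring is proper and these edges share the vertex $u_i$; as at most $i-1<k-1=t$ colours have been used so far, a valid continuation always exists. Hence a rainbow canonical \emph{path} through all $k$ blobs is free. The obstruction is the closing edge $u_ku_1$, whose colour we cannot control once $u_1$ and $u_k$ are pinned down, and a greedy selection may genuinely get stuck there. Overcoming this global, cyclic constraint — rather than the purely local extension — is exactly the role I intend for the topological version of Hall's theorem.

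I would therefore phrase the simultaneous choice of all $u_i$ as an independent system of representatives and apply the theorem of Aharoni, Berger and Meshulam: for a graph $\Gamma$ with vertex sets $V_1,\dots,V_k$, an independent transversal exists provided $\eta\!\left(\mathcal I\!\left(\Gamma\!\left[\bigcup_{i\in S}V_i\right]\right)\right)\ge |S|$ for every $S\subseteq[k]$, where $\mathcal I$ is the independence complex and $\eta$ its topological connectivity. Here I take $V_i$ to be the set of edges in slot $\{U_i,U_{i+1}\}$ and let $\Gamma$ join two edges whenever they share a colour or cannot coexist on a single canonical cycle (equal colour, or adjacent slots disagreeing on the shared blob); an independent transversal is then precisely a rainbow $C_k$. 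The heart of the proof — and the step I expect to be hardest — is verifying the connectivity hypothesis $\eta\ge|S|$ for \emph{every} sub-collection of slots. The geometric incompatibilities between adjacent slots make $\Gamma$ fairly dense, so a crude degree-based bound on $\eta$ is too weak; instead one must exploit the rigid product structure of these complexes. It is the unavoidable factor of $2$ lost in such connectivity estimates that explains why the bound is sharp only up to a multiplicative factor of $2$, and solving the resulting inequality between $t$ and $k$ is what pins down $t=k-1$.

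Finally I would treat the two parities separately, doing the even case first, since the blow-up of an even cycle is bipartite and the associated complexes are cleaner. For odd $k$ a genuine parity obstruction appears: an odd cycle cannot be properly $2$-coloured, which is precisely what makes the closure of the rainbow path resist the connectivity estimate and forces a larger choice of $t$. Absorbing this loss — either by enlarging every blob to size $k+2$ or by attaching a small parity-correcting gadget before applying the same topological Hall argument — is what yields the weaker constant $2(k+2)$ in the odd case. Throughout, the principal difficulty is the connectivity computation together with the bookkeeping needed to encode the cyclic closure inside the representative system without inflating $\Gamma$ so far that the Hall-type condition fails.
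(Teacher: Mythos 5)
Your proposal reaches for the right tool (the Aharoni--Berger--Meshulam topological Hall theorem) and the right general shape of host graph (a blow-up of the cycle), but it has a genuine gap exactly where you predict one: the verification of the connectivity/width hypothesis is never carried out, and with your choice of host graph it is not at all clear that it can be. In the full blow-up, selecting one edge per slot subjects adjacent slots to \emph{consistency} constraints (the two chosen edges must agree on the shared blob), so each edge of a slot conflicts with about $t^2-t$ of the $t^2$ edges in each neighbouring slot. The conflict graph $\Gamma$ is therefore so dense that no local counting bound on $\eta$ or on the fractional width can succeed, and the ``rigid product structure'' you invoke to rescue the estimate is left entirely unexamined; note also that an independent set of $\Gamma$ meeting two adjacent slots must consist of edges through a single vertex of the shared blob, which makes the relevant independence complexes rather poorly connected. (A smaller inaccuracy: for even $k$ it is false that every copy of $C_k$ in the blow-up is canonical --- already two adjacent blobs contain many copies of $C_4$ --- though this does not harm your argument since you only seek canonical copies.)

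The paper sidesteps the consistency problem by blowing up only every \emph{other} vertex of the cycle: its host graph $G_{2k,d}$ has $k$ hub vertices $u_1,\dots,u_k$ of degree $2d$, joined consecutively through $d$ internally disjoint paths of length $2$. Choosing one middle vertex $v_{i,j}$ per slot automatically produces a cycle, so the \emph{only} constraints are colour coincidences; each choice is encoded as the $3$-element set $\{c(u_iv_{i,j}),\,v_{i,j},\,c(v_{i,j}u_{i+1})\}$, pairwise disjointness of representatives is exactly rainbowness, and properness of the colouring at the hubs shows each colour lies in at most $k$ of these sets. The fractional width bound $w^*(\mathcal{F}_I)\ge d|I|/(2k+1)>|I|-1$ for $d\ge 2k-1$ is then a three-line computation, which is the entire content of the even case. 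The odd case is not handled by a ``parity-correcting gadget'' but by a clean reduction: keep one slot as a single edge $u_ku_1$ of colour $\alpha$, delete every middle vertex incident to an edge of colour $\alpha$, replace the single edge by $d-2$ fresh length-two paths to recover a copy of $G_{2k,d-2}$, find a rainbow even cycle there, and swap its last two edges for $u_ku_1$; this costs the extra $+2$ in the bound. To repair your write-up you would either have to prove the missing connectivity estimate for the full blow-up (which I do not believe follows with $t=k-1$ by any of the standard sufficient conditions) or switch to the alternate blow-up, at which point you are reproducing the paper's argument.
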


For some small values of $k$ we can prove sharper bounds. It is obvious that $AR_d(C_3) = 2$ and we can prove that $AR_d(C_5) \leq 6$ (this will be discussed in Section~\ref{sec::open}). Our next result determines the exact value of $AR_d(C_4)$.  

\begin{proposition} \label{propos:C4}
$$
AR_d(C_4) = 4 \,.
$$
\end{proposition}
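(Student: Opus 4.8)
The plan is to prove the two inequalities separately. For the upper bound $AR_d(C_4) \le 4$ I would show that $G = K_5$, which has maximum degree $4$, already forces a rainbow $C_4$ under every proper edge colouring; by the definition of $AR_d$ this immediately gives $AR_d(C_4) \le 4$ (equivalently it shows $AR(C_4) \le 5$, so one may instead invoke~\eqref{eq:local}). The crucial observation is that, in a proper colouring, a $4$-cycle fails to be rainbow if and only if one of its two pairs of opposite (hence disjoint) edges is monochromatic. First I would note that every colour class in $K_5$ is a matching of size at most $2$, and that a size-$2$ class is a perfect matching on some $4$-subset $S$, missing a unique vertex. Next, for a fixed $4$-subset $S$, the three $4$-cycles on $S$ correspond to the three perfect matchings of $K_4[S]$, and none of them is rainbow precisely when at least two of these three matchings are monochromatic.

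Thus if $K_5$ had a proper colouring with no rainbow $C_4$, then for every vertex $v$ at least two size-$2$ colour classes would have to miss $v$. Summing this requirement over all five vertices, and using that each size-$2$ class misses exactly one vertex, the total number $t$ of size-$2$ colour classes would satisfy $t \ge 10$; but $K_5$ has only $10$ edges, so $t \le 5$, a contradiction. This counting step is short and is the part I am most confident about.

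For the lower bound I must show $AR_d(C_4) \ge 4$; note that~\eqref{eq:lower} only yields $AR_d(C_4) \ge 3$, so genuine work is needed. Equivalently, I would prove that every graph $G$ with $\Delta(G) \le 3$ admits a proper edge colouring with no rainbow $C_4$, treating each connected component separately. If $\chi'(G) \le 3$ I am done immediately: in any proper $3$-edge-colouring a $4$-cycle sees only three colours on its four edges, so by the pigeonhole principle two of them coincide, and properness forces these to be opposite edges, whence no $C_4$ is rainbow.

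The remaining and genuinely harder case is that of subcubic class-$2$ graphs (those with $\chi'(G) = 4$) that contain a $4$-cycle. Here I would start from a proper $4$-edge-colouring chosen to minimise the number of rainbow $4$-cycles and argue that this number is $0$: given a rainbow $C_4$ whose edges are coloured $1,2,3,4$, I would apply a Kempe-chain recolouring on a suitable pair of colours to make one pair of opposite edges monochromatic, exploiting the very restricted way in which $4$-cycles can overlap in a subcubic graph (each edge lies in only a bounded number of them). The main obstacle is exactly controlling this step: one must guarantee that repairing the chosen $C_4$ does not create new rainbow $4$-cycles elsewhere, so that the count strictly decreases and minimality is contradicted. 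I expect this to require a careful local analysis of the colour classes around the four vertices of the cycle.
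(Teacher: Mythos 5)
Your upper bound argument is correct and takes a genuinely different route from the paper. The paper exhibits $K_{2,4}$ (which is the graph $G_{4,2}$ from the proof of Theorem~\ref{thm:cycles}) and checks directly that every proper edge colouring of it has a rainbow $C_4$; you instead use $K_5$ together with a clean double count: every colour class of $K_5$ is a matching of size at most $2$, a size-$2$ class misses exactly one vertex, killing all three $4$-cycles on a $4$-set $S$ forces at least two of the three perfect matchings of $K_4[S]$ to be monochromatic (hence at least two size-$2$ classes missing the vertex outside $S$), and summing over the five vertices gives $t \geq 10$ size-$2$ classes versus $t \leq 5$ available. This is valid (and in fact proves $AR(C_4)=5$, so it also follows via~\eqref{eq:local}); the paper's choice of $K_{2,4}$ is just a smaller witness.

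The lower bound, however, has a genuine gap. Your reduction to connected subcubic graphs and the pigeonhole argument for the case $\chi'(G)\leq 3$ are fine, but for class-$2$ subcubic graphs you only propose a Kempe-chain recolouring starting from a $4$-edge-colouring minimising the number of rainbow $C_4$'s, and you yourself identify the unresolved obstacle: there is no argument that the swap cannot create new rainbow $4$-cycles, so no progress measure is established and the induction on the count does not close. This is precisely the hard part of the proposition, and the paper avoids local recolouring entirely. It takes an inclusion-minimal counterexample $G$ with $\Delta(G)\leq 3$, shows by a case analysis around a degree-$2$ vertex that $G$ must be bridgeless and $3$-regular, invokes Petersen's Theorem to extract a perfect matching $M$, and then colours globally: the complementary $2$-factor $H=G-M$ is a union of cycles, each odd cycle of $H$ is shown to contain a \emph{free} edge (one through which every $C_4$ of $G$ uses two edges of $M$), and the colouring with two colours on $H$ minus the free edges, a third on the free edges, and a fourth on $M$ has no rainbow $C_4$ by construction. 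Without something playing the role of the bridgeless/cubic reduction, Petersen's Theorem, and the free-edge device (or a completed version of your recolouring scheme), your lower bound is not proved.
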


\subsection{Notation}
Our graph-theoretic notation is standard and follows that of~\cite{West}. In particular, we use the following. For a graph $G$, let $V(G)$ and $E(G)$ denote its sets of vertices and edges respectively, and let $v(G) = |V(G)|$ and $e(G) = |E(G)|$. For a graph $G$ and a set $S \subseteq V(G)$ we denote by $G[S]$ the graph which is induced on the set $S$.
Denote the maximum degree of a graph $G$ by $\Delta(G)$, and its minimum degree by $\delta(G)$.
For any integer $k \geq 3$, we denote the cycle on $k$ vertices by $C_k$. The \emph{length} of a cycle is the number of its edges. The \emph{girth} of $G$ is the length of a shortest cycle in $G$ (if $G$ is a forest, then its girth is defined to be infinity). The \emph{degeneracy} of a graph $G$ is the smallest integer $d$ for which there exists an ordering $v_1, \ldots, v_n$ of the vertices of $G$ such that $|\{1 \leq j < i : v_j v_i \in E(G)\}| \leq d$ holds for every $1 \leq i \leq n$. A graph with degeneracy $d$ is said to be \emph{$d$-degenerate}. 

The rest of this paper is organized as follows. In Section~\ref{sec:general} we prove Theorem~\ref{propos:general}. In Section~\ref{sec:forests} we prove Theorem~\ref{thm:Forest}. In Section~\ref{sec:cycles} we prove Theorem~\ref{thm:cycles} and Proposition~\ref{propos:C4}. Finally, in Section~\ref{sec::open} we present some open problems.

\section{General upper bound} \label{sec:general}

\begin{proof} [Proof of Theorem~\ref{propos:general}]
Since $H$ is $k$-degenerate, there exists an ordering $u_1, u_2, \ldots, u_{v(H)}$ of the vertices of $H$ such that $|I_{\ell}| \leq k$ holds for every $1 \leq \ell \leq v(H)$, where $I_{\ell} := \{1 \leq i < \ell : u_i u_{\ell} \in E(H)\}$. Let $n = k e(H) - k + v(H)$ and let $c$ be a proper edge colouring of $K_n$. Inductively, we choose vertices $w_1, w_2, \ldots, w_{v(H)}$ in $K_n$ such that the colours of all edges in the set $E_{\ell} := \{w_i w_j : 1 \leq i < j \leq \ell \textrm{ and } u_i u_j \in E(H)\}$ are distinct for every $1 \leq \ell \leq v(H)$. Since, in particular, this is true for $E_{v(H)}$, by mapping $u_i$ to $w_i$ for every $1 \leq i \leq v(H)$, we obtain a rainbow copy of $H$. The vertex $w_1$ may be chosen arbitrarily. Let $2 \leq \ell \leq v(H)$ and assume the vertices $w_1, w_2, \ldots, w_{\ell-1}$ were already chosen and we now wish to choose $w_{\ell}$. Let $W_{\ell} = V(K_n) \setminus \{w_1, w_2, \ldots, w_{\ell-1}\}$. For every $i \in I_{\ell}$ we have 
$$
|\{w \in W_{\ell} : \exists e \in E_{\ell-1} \textrm{ such that } c(w_i w) = c(e)\}| \leq |E_{\ell-1}| \leq e(H) - |I_{\ell}|
$$ 
and therefore
\begin{eqnarray*}
|\{w \in W_{\ell} : \exists i \in I_{\ell}, \; \exists e \in E_{\ell-1} \textrm{ such that } c(w_i w) = c(e)\}| &\leq& |I_{\ell}| \left(e(H) - |I_{\ell}|\right) \leq k e(H) - k \\
&<& n - (\ell-1) = |W_{\ell}| \,.
\end{eqnarray*}
where the last inequality holds by our choice of $n$.

We can choose $w_{\ell}$ to be any vertex of the non-empty set $W_{\ell} \setminus \{w \in W_{\ell} : \exists i \in I_{\ell}, \; \exists e \in E_{\ell-1} \textrm{ such that } c(w_i w) = c(e)\}$.
\end{proof}

\begin{urem} Local anti-Ramsey numbers are discussed in~\cite{Alon_et_al} as a special case of a broader notion. For a graph $H$ and a positive integer $m$, let $g(m,H)$ be the smallest integer $n$ such that any edge colouring of $K_n$ in which no colour appears more than $m$ times at each vertex yields a rainbow copy of $H$. Obviously $g(1,H)$ is simply $AR(H)$. The upper bound~\eqref{eq:Alon_et_al} is a special case of the more general upper bound 
\begin{equation} \label{eq:g}
g(m,H) \leq 2 m \Delta(H)^2 v(H) + 32 m \Delta(H)^4 + 4 v(H)
\end{equation}
for every graph $H$. The proof of Theorem~\ref{propos:general} extends, with obvious changes, to an upper bound on $g(m,H)$ for every $m$,
$$
g(m,H) \leq m k e(H) - m k + v(H) \leq m k^2 v(H) - m k \binom{k+1}{2} - m k + v(H)
$$
which is an improvement of~\eqref{eq:g}. 
\end{urem}

\section{Degree anti-Ramsey numbers of forests} \label{sec:forests}

\begin{lem} \label{lem:same_colours}
Let $T$ be a tree on $k+2$ vertices which is not a star. Let $G$ be a $k$-regular connected graph with girth at least $k+2$. If there exists a proper edge colouring of $G$ with no rainbow copy of $T$, then $G$ is $k$-edge-colourable.
\end{lem}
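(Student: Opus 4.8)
The plan is to show that the given colouring $c$ must in fact use only $k$ colours; since $G$ is $k$-regular and $c$ is proper, a colouring using only $k$ colours is automatically a proper $k$-edge-colouring, which is exactly the conclusion we want. For a vertex $u$ write $S_u$ for the set of $k$ colours appearing on the edges incident with $u$. Because $G$ is connected, $c$ uses more than $k$ colours if and only if some edge $ab$ satisfies $S_a \neq S_b$: if the palettes agreed across every edge, connectivity would force one common palette of size $k$. Hence it suffices to prove that the existence of an edge $ab$ with $S_a \neq S_b$ produces a rainbow copy of $T$, which contradicts our hypothesis.

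Two preliminary facts drive the construction, both exploiting the girth assumption. First, if a subtree $T_0$ on at most $k+1$ vertices has been embedded injectively into $G$ via a map $\phi$, then its image is induced: any chord would close a cycle of length at most $k+1 < k+2$, which the girth forbids. Consequently, when we try to grow a partial rainbow embedding by attaching a new leaf at an already-placed vertex $\phi(y)=u$, the only neighbours of $u$ that are already used are the images of the tree-neighbours of $y$, so $u$ has exactly $k-\deg_{T_0}(y)$ unused neighbours. Second, a greedy count: if $T_0$ has $m$ edges, then among those unused neighbours at most $m-\deg_{T_0}(y)$ carry a colour already appearing in $T_0$ (here we use that $c$ is proper, so each colour occurs at most once at $u$). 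Thus at least $k-m$ extensions are good, and a rainbow extension exists whenever $m\le k-1$. Since $T':=T-z$ (for a leaf $z$ of $T$) has only $k$ edges, a rainbow copy of $T'$ can always be built; the sole obstruction to a rainbow $T$ is the very last leaf, where the bound degenerates to $k-k=0$.

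To defeat this last-leaf obstruction we exploit the palette mismatch. Assume $\beta\in S_b\setminus S_a$ and let $bv_1$ be the edge of colour $\beta$ at $b$; note $v_1\neq a$ and $c(ab)\neq\beta$, since $\beta\notin S_a$. As $T$ is not a star it has diameter at least $3$, so a longest path $p_0p_1\cdots p_d$ has $d\ge 3$, with $p_0$ a leaf whose unique neighbour is $p_1$ and with $p_2$ internal. Take $z=p_0$ and begin the embedding by forcing $p_1\mapsto a$, $p_2\mapsto b$, $p_3\mapsto v_1$, so that the edges $p_1p_2$ and $p_2p_3$ receive the distinct colours $c(ab)$ and $\beta$. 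Complete a rainbow embedding of $T'=T-p_0$ by the greedy procedure above, extending this prefix; this succeeds since only $k$ edges are required. Writing $C_0$ for the set of $k$ colours used, we have $\beta\in C_0\setminus S_a$, whence $S_a\not\subseteq C_0$ and there is a colour in $S_a\setminus C_0$. The corresponding edge at $a=\phi(p_1)$ leads to an unused vertex (again by the girth/chord argument, a potential coincidence would force a cycle of length at most $k+1$), so attaching $p_0$ along it yields a rainbow copy of $T$ — the desired contradiction.

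The heart of the matter, and the step I expect to be most delicate, is exactly this final manoeuvre: the greedy count leaves no slack for the last edge, so the whole argument hinges on deliberately routing the colour $\beta$ into the copy of $T'$ so as to guarantee a fresh colour in $S_a$ at the attachment vertex. Both hypotheses are used in an essential way and must be checked with care: the non-star assumption supplies the length-three path that lets us plant the edges $ab$ and $bv_1$ inside $T$, while the girth assumption keeps every partial embedding induced, ensuring that the extension count is clean and that the final leaf lands on a genuinely new vertex.
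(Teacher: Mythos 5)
Your proof is correct and follows essentially the same strategy as the paper's: find adjacent vertices $a,b$ with distinct palettes, route a colour $\beta\in S_b\setminus S_a$ onto the image of an edge at distance two from a saved leaf, greedily embed the rest of the tree (using the girth bound to keep partial images induced), and use the palette mismatch to attach the final leaf at $a$. The only difference is organisational: the paper splits $T-x_0-x_1x_2$ into two subtrees and embeds them in a fixed order, whereas you grow $T-p_0$ in an arbitrary leaf-extension order from the path $p_1p_2p_3$; the counting and the final step are the same.
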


\begin{proof} 
Let $x_0$ be some leaf of $T$ and let $x_1$ be its unique neighbour. Let $x_2$ be a neighbour of $x_1$ in $T$ which is not a leaf; such a vertex $x_2$ exists since $T$ is not a star. For $i \in \{1,2\}$, let $T_i$ denote the tree in $T \setminus \{x_0, x_1 x_2\}$ which contains $x_i$. Let $x_1, y_1, \ldots, y_s$ be an ordering of the vertices of $T_1$ such that $T_1[\{x_1, y_1, \ldots, y_j\}]$ is a tree for every $1 \leq j \leq s$. Similarly, let $x_2, z_1, \ldots, z_t$ be an ordering of the vertices of $T_2$ such that $T_2[\{x_2, z_1, \ldots, z_j\}]$ is a tree for every $1 \leq j \leq t$. Note that $t \geq 1$ and that $t + s = k-1$.  

Let $G$ be as in the statement of the lemma and let $c$ be a proper edge colouring of $G$ with no rainbow copy of $T$.
For any vertex $v \in V(G)$ let $N_c(v) = \{c(e) : e \in E(G) \textrm{ and } v \in e\}$. 
Suppose for a contradiction that $G$ is not $k$-edge-colourable, in particular, there exist vertices $v_1, v_2 \in V(G)$ such that $N_c(v_1) \neq N_c(v_2)$. Since $G$ is connected, it follows that there are also adjacent vertices $u_1, u_2 \in V(G)$ such that $N_c(u_2)$ is not contained in $N_c(u_1)$. We will describe an embedding $\phi$ of $T$ into $G$ such that $\phi(T)$ is rainbow; this will contradict our assumption that no copy of $T$ in $G$ is rainbow under $c$. We set $\phi(x_1) = u_1$ and $\phi(x_2) = u_2$. Let $w_1$ be a neighbour of $u_2$ in $G$ such that $c(u_2 w_1) \notin N_c(u_1)$; we set $\phi(z_1) = w_1$. Inductively, for every $2 \leq i \leq t$, we can find a vertex $w_i \in V(G)$ which satisfies the following two properties:
\begin{description}
\item [(1)] $w_i \notin \{u_1, u_2, w_1, \ldots, w_{i-1}\}$.
\item [(2)] $c(w_i \phi(z)) \notin \{c(e) : e \in \{u_1 u_2\} \cup E(\phi(T_2[\{x_2, z_1, \ldots, z_{i-1}\}]))\}$ where $z \in \{x_2, z_1, \ldots, z_{i-1}\}$ is the unique vertex for which $z z_i$ is an edge of $T_2$.   
\end{description}
Such a vertex $w_i$ exists since $i < k$ and since the girth of $G$ is at least $k+2$. For every $2 \leq i \leq t$ we set $\phi(z_i) = w_i$.

Similarly, for every $1 \leq i \leq s$, we can find a vertex $w'_i \in V(G)$ which satisfies the following two properties:
\begin{description}
\item [(1')] $w'_i \notin \{u_1, u_2, w_1, \ldots, w_t, w'_1, \ldots, w'_{i-1}\}$.
\item [(2')] $c(w'_i \phi(y)) \notin \{c(e) : e \in \{u_1 u_2\} \cup E(\phi(T_2)) \cup E(\phi(T_1[\{x_1, y_1, \ldots, y_{i-1}\}]))\}$ where $y \in \{x_1, y_1, \ldots, y_{i-1}\}$ is the unique vertex for which $y y_i$ is an edge of $T_1$.   
\end{description}
Such a vertex $w'_i$ exists since $t + i < k$ and since the girth of $G$ is at least $k+2$. For every $1 \leq i \leq s$ we set $\phi(y_i) = w'_i$. 

Finally, we can find a vertex $u_0 \in V(G)$ such that $u_0 \notin \{u_1, u_2, w_1, \ldots, w_t, w'_1, \ldots, w'_s\}$, $u_0 u_1 \in E(G)$ and $c(u_0 u_1) \notin \{c(e) : e \in E(\phi(T \setminus \{x_0\}))\setminus\{u_2 w_1\}\}$. Such a vertex exists since $|\{c(e) : e \in E(\phi(T \setminus \{x_0\}))\setminus\{u_2 w_1\}\}| < k$ and since the girth of $G$ is at least $k+2$ . Since $c(u_2 w_1) \notin N_c(u_1)$ by assumption, it follows that $c(u_0 u_1) \neq c(u_2 w_1)$ and thus $\phi(T)$ is a rainbow copy of $T$ in $G$.   
\end{proof}

\begin{lem} \label{lem::cutVertex}
Let $k$ be a positive integer and let $G$ be a $k$-regular connected graph. If $G$ has a cut-vertex, then $G$ is not $k$-edge-colourable.
\end{lem}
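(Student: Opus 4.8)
The plan is to argue by contradiction, exploiting the fact that a proper $k$-edge-colouring of a $k$-regular graph is nothing but a decomposition of its edge set into $k$ perfect matchings: each colour must appear exactly once at every vertex, so each colour class saturates every vertex. The structural feature I would use is that when $v$ is a cut-vertex, every edge leaving a component of $G - v$ must pass through $v$; hence the cut separating such a component from the rest of $G$ consists \emph{entirely} of edges incident to the single vertex $v$.

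Concretely, suppose for contradiction that $G$ admits a proper $k$-edge-colouring, and write its colour classes as perfect matchings $M_1, \ldots, M_k$. Since $v$ is a cut-vertex, $G - v$ has at least two components; fix one of them, say $C$, put $A = V(C)$, let $a = |A|$, and let $d$ be the number of edges between $v$ and $A$ (so $d \geq 1$ by connectivity of $G$). For each colour $i$, the vertex $v$ is matched in $M_i$ to exactly one neighbour, so the number $t_i$ of vertices of $A$ matched across the cut $(A, V(G) \setminus A)$ lies in $\{0,1\}$; the remaining $a - t_i$ vertices of $A$ are matched within $A$, forcing $a - t_i$ to be even and hence $t_i \equiv a \pmod 2$.

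Next I would sum over the colours: since each of the $d$ edges joining $v$ to $A$ lies in exactly one matching, $\sum_{i=1}^{k} t_i = d$. Combining this with $t_i \in \{0,1\}$ and $t_i \equiv a \pmod 2$ pins everything down. If $a$ is even, then $t_i = 0$ for every $i$ and so $d = 0$, contradicting $d \geq 1$; hence $a$ is odd, whence $t_i = 1$ for every $i$ and $d = k$. Thus every component of $G - v$ must have odd order and must absorb all $k$ edges incident to $v$. As there are at least two such components, summing $d$ over them accounts for at least $2k$ edges at $v$, contradicting $\deg_G(v) = k$ (here we use that $k$ is a positive integer, so $2k > k$). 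This contradiction completes the argument.

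I do not expect a serious obstacle: the entire content is the parity bookkeeping for a single perfect matching, and the decisive point—that $t_i \leq 1$—is precisely what the cut-vertex hypothesis buys us, since a general edge-cut could meet a matching in many edges. The only mild care needed is to verify at the outset that each colour class of a proper $k$-edge-colouring is genuinely a perfect matching, which follows from $k$-regularity together with the fact that a proper colouring uses at most $k$ colours at each degree-$k$ vertex.
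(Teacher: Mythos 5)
Your proof is correct and rests on the same two ingredients as the paper's: the decomposition of a $k$-regular $k$-edge-colourable graph into $k$ perfect matchings, and the parity of the components of $G-v$, each of which can send at most one matching edge to the cut-vertex. The paper reaches the contradiction slightly more directly (it finds one odd component and one matching containing an edge from $v$ into a \emph{different} component, so that matching would have to induce a perfect matching on an odd vertex set), but this is only a minor variation on your degree-counting finish.
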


Lemma~\ref{lem::cutVertex} appears as an exercise in~\cite{West}; for the sake of completeness we include a short proof.

\begin{proof}
Suppose for a contradiction that $G$ is $k$-edge-colourable. Since $G$ is $k$-regular, it follows that $E(G)$ can be decomposed into $k$ perfect matchings $M_1, \ldots, M_k$; in particular $v(G)$ is even. Let $x$ be a cut-vertex of $G$ and let $C_1, C_2, \ldots, C_t$ be the connected components of $G \setminus x$. Since $|V(G) \setminus \{x\}|$ is odd, there must exist some $1 \leq i \leq t$ such that $|C_i|$ is odd. Assume without loss of generality that $|C_1|$ is odd and let $y \in C_2$ be a neighbour of $x$. Assume without loss of generality that $xy \in M_1$. Then the restriction of $M_1$ to $C_1$ must be a perfect matching of $G[C_1]$, which is impossible as $|C_1|$ is odd.  
\end{proof}

\begin{proposition} \label{prop::highGirthClass2}
For every integer $k \geq 2$, there exists a $k$-regular connected graph with girth at least $k+2$ which is not $k$-edge-colourable.
\end{proposition}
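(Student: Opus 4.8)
The plan is to produce, for each $k \ge 2$, a $k$-regular connected graph $G$ of girth at least $k+2$ that contains a cut-vertex; Lemma~\ref{lem::cutVertex} then immediately gives that $G$ is not $k$-edge-colourable, which is exactly what is required. The only case in which this strategy cannot work is $k=2$, since a connected $2$-regular graph is a single cycle and so has no cut-vertex; there I would simply take $G = C_5$, which is $2$-regular, connected, of girth $5 \ge 4$, and not $2$-edge-colourable (being an odd cycle). For $k \ge 3$ the starting point is the classical theorem of Erd\H{o}s and Sachs, which guarantees, for every $k$ and every $g$, a connected $k$-regular graph of girth at least $g$; I would fix such a graph $H$ of girth at least $k+3$ and, in addition, take $H$ to be $2$-connected (which is available from standard explicit or random constructions). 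The role of the high girth is that, after deleting a vertex or an edge, the ``deficient'' vertices are forced to be pairwise far apart, and this is what keeps the girth of the final graph above $k+1$.

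For even $k \ge 4$ I would build $G$ around a single central cut-vertex. Delete an edge $a_i b_i$ from each of $k/2$ disjoint copies $H_i$ of $H$; in each copy this creates exactly two vertices of degree $k-1$, namely $a_i$ and $b_i$, and since $a_i b_i$ was an edge we get $\mathrm{dist}_{H_i - a_i b_i}(a_i,b_i) \ge \mathrm{girth}(H) - 1 \ge k+1$. Now add one new vertex $x$ joined to $a_i$ and $b_i$ for every $i$. Then $\deg(x) = 2\cdot(k/2) = k$, every $a_i, b_i$ is restored to degree $k$, and $G$ is $k$-regular and connected. Removing $x$ disconnects the $k/2 \ge 2$ arms, so $x$ is a cut-vertex. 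Finally, every cycle of $G$ either lies inside a single arm (girth at least $k+2$) or passes through $x$; in the latter case it uses two edges at $x$, which must lead into the same arm (distinct arms meet only at $x$), so the cycle has length at least $2 + \mathrm{dist}(a_i,b_i) \ge k+3$. Hence $G$ has girth at least $k+2$, and Lemma~\ref{lem::cutVertex} finishes this case.

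For odd $k \ge 3$ a single central cut-vertex of degree $k$ cannot be fed by arms attached through a single edge each, since parity forbids a graph with exactly one vertex of degree $k-1$ unless its order is odd, so I would instead create a bridge. First I would manufacture a gadget $B$ with exactly one deficient vertex: delete a vertex $v$ of $H$, whose neighbours $u_1, \dots, u_k$ then become deficient and satisfy $\mathrm{dist}_{H-v}(u_i,u_j) \ge \mathrm{girth}(H) - 2 \ge k+1$, and then add the matching $u_2 u_3, u_4 u_5, \dots, u_{k-1} u_k$ (legitimate since $k-1$ is even). This restores all of $u_2, \dots, u_k$ to degree $k$, leaves $u_1$ of degree $k-1$, and every newly created cycle uses a matching edge together with a path of length at least $k+1$, so the girth remains at least $k+2$. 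Taking two disjoint copies of $B$ and joining their deficient vertices by a single (bridge) edge yields a $k$-regular connected graph of girth at least $k+2$ in which each endpoint of the bridge is a cut-vertex, and Lemma~\ref{lem::cutVertex} again applies. The parity needed for $B$ to exist is automatic, since a $k$-regular graph with $k$ odd has even order, whence $|V(H)|-1$ is odd.

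The routine parts are the degree and connectivity bookkeeping (where the $2$-connectivity of $H$ is used to keep $H-v$ and $H-a_ib_i$ connected). The part requiring genuine care is the girth analysis, namely verifying in each construction that the edge deletion, vertex deletion, and added edges never create a cycle shorter than $k+2$; this is precisely where I expect the main work to lie, and it reduces to the distance estimates between deficient vertices for which the hypothesis $\mathrm{girth}(H) \ge k+3$ is invoked. The separate treatment of $k=2$ is unavoidable, since no $2$-regular connected graph has a cut-vertex.
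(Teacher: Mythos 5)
Your proposal is correct, but it diverges from the paper's proof in both cases, so a comparison is worthwhile. For even $k\geq 4$ the paper does not use a cut-vertex at all: it simply takes a connected $k$-regular graph of girth at least $k+2$ with an \emph{odd} number of vertices (possible because $k$ is even, via the standard random-regular-graph existence results), and observes that a $k$-regular graph of odd order has no perfect matching and hence cannot be $k$-edge-coloured. This is shorter than your construction and requires no girth bookkeeping, though your cut-vertex gadget (a hub $x$ joined to the two endpoints of a deleted edge in each of $k/2$ high-girth arms) is also sound --- the distance bound $\mathrm{dist}_{H_i - a_ib_i}(a_i,b_i)\geq \mathrm{girth}(H)-1$ does keep every cycle through $x$ long enough, and in fact girth $k+2$ for $H$ would already suffice there. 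For odd $k$ both you and the paper reduce to Lemma~\ref{lem::cutVertex}, but with different gadgets: the paper deletes one edge $x_iy_i$ from each of $k-1$ disjoint high-girth $k$-regular graphs and attaches half of the resulting degree-deficient pairs to a new vertex $u$ and half to a new adjacent vertex $v$, so that $u$ is a cut-vertex and no new short cycles arise; you instead delete a vertex, repair $k-1$ of its neighbours with a matching, and join two such gadgets by a bridge. Your version needs the slightly stronger hypothesis $\mathrm{girth}(H)\geq k+3$ (a cycle through a single added matching edge has length only $1+(\mathrm{girth}(H)-2)$, so girth $k+2$ would not do) and an explicit $2$-connectivity assumption on $H$ to keep $H-v$ connected, both of which you correctly flag and both of which are available from the same existence results; the paper's edge-deletion variant avoids the matching-repair step and is marginally cleaner. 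In short, your argument is complete and valid, trades the paper's parity shortcut for a uniform cut-vertex strategy, and pays for that uniformity with a somewhat heavier girth analysis.
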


\begin{proof}
We distinguish between two cases according to the parity of $k$. Assume first that $k$ is even. Since for $k=2$ we can take $C_5$, we assume that $k \geq 4$. Let $G$ be a $k$-regular connected graph with an odd number of vertices and with girth at least $k+2$. Such a graph $G$ exists since $k$ is even and since, with positive probability, for every $k\geq 3$, a random $k$-regular graph is connected and has arbitrarily large girth (see, e.g.,~\cite{Wormald}). Since $v(G)$ is odd, $G$ cannot be $k$-edge-colourable.     

Next, assume that $k \geq 3$ is odd. Let $H_1, \ldots, H_{k-1}$ be pairwise vertex disjoint $k$-regular connected graphs with girth at least $k+2$. For every $1 \leq i \leq k-1$ let $x_i y_i$ be an arbitrary edge of $H_i$ and let $G_i = H_i \setminus \{x_i y_i\}$. Let $G$ be the graph with vertex set $V(G_1) \cup \ldots \cup V(G_{k-1}) \cup \{u,v\}$ and edge set $E(G_1) \cup \ldots \cup E(G_{k-1}) \cup \{uv\} \cup \{u x_i : 1 \leq i \leq (k-1)/2\} \cup \{u y_i : 1 \leq i \leq (k-1)/2\} \cup \{v x_i : (k-1)/2 + 1 \leq i \leq k-1\} \cup \{v y_i : (k-1)/2 + 1 \leq i \leq k-1\}$. It is evident that $G$ is a $k$-regular connected graph with girth at least $k+2$. Since, moreover, $u$ is clearly a cut-vertex of $G$, it follows by Lemma~\ref{lem::cutVertex} that $G$ is not $k$-edge-colourable.   
\end{proof}

In the proof of Theorem~\ref{thm:Forest} we will make use of the following well-known result of Bollob\'as~\cite{Bollobas}.

\begin{theorem} \label{thm::Bollobas}
Let $a, b$ and $N$ be positive integers and let $A_1, \ldots, A_N, B_1, \ldots, B_N$ be sets satisfying the following three properties:
\begin{description}
\item [(i)] $|A_i| = a$ and $|B_i| = b$ for every $1 \leq i \leq N$.
\item [(ii)] $A_i \cap B_i = \emptyset$ for every $1 \leq i \leq N$.
\item [(iii)] $A_i \cap B_j \neq \emptyset$ for every $1 \leq i \neq j \leq N$.
\end{description}
Then $N \leq \binom{a+b}{a}$.
\end{theorem}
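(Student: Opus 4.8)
The plan is to use the classical permutation (probabilistic) argument. First I would let $X = \bigcup_{i=1}^{N} (A_i \cup B_i)$ be the finite ground set containing all relevant elements, and consider a uniformly random linear ordering $\pi$ of $X$. For each $1 \le i \le N$, I define the event $E_i$ that in the ordering $\pi$ every element of $A_i$ precedes every element of $B_i$. The whole argument will come down to computing the probability of each $E_i$ and showing that these events are pairwise disjoint.

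Computing $\Pr[E_i]$ is the routine step. Since $A_i \cap B_i = \emptyset$ by property (ii), the set $A_i \cup B_i$ has exactly $a+b$ elements, and under a uniformly random permutation of $X$ the relative order of these $a+b$ elements is itself uniform. Exactly $a!\,b!$ of the $(a+b)!$ relative orderings place all of $A_i$ before all of $B_i$, so $\Pr[E_i] = \frac{a!\,b!}{(a+b)!} = \binom{a+b}{a}^{-1}$, independently of $i$.

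The key step, which I expect to be the crux of the argument, is to show that $E_1, \ldots, E_N$ are pairwise disjoint; this is exactly where property (iii) is used, and used in both directions. Suppose for contradiction that $E_i$ and $E_j$ both occur for some $i \neq j$. Applying (iii) to the ordered pair $(i,j)$ yields an element $x \in A_i \cap B_j$, and applying it to $(j,i)$ yields an element $y \in A_j \cap B_i$. Now $E_i$ forces $x \in A_i$ to precede $y \in B_i$ in $\pi$, while $E_j$ forces $y \in A_j$ to precede $x \in B_j$ in $\pi$. These two assertions are contradictory, so no permutation can lie in two of the events simultaneously.

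Finally, pairwise disjointness gives $\sum_{i=1}^{N} \Pr[E_i] \le 1$, and since every summand equals $\binom{a+b}{a}^{-1}$ this yields $N\binom{a+b}{a}^{-1} \le 1$, i.e. $N \le \binom{a+b}{a}$, as required. The main obstacle is entirely conceptual rather than computational: recognizing the random-ordering formulation and verifying the disjointness of the $E_i$ via the two-sided use of the cross-intersecting condition (iii); once that is in place, the probability computation and the final summation are immediate.
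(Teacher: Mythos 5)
Your proof is correct and complete. Note that the paper does not actually prove this statement: it is quoted as a known result of Bollob\'as and used as a black box, with a citation to the original 1965 paper. So there is no in-paper argument to compare against. What you have written is the standard permutation-method proof of the set-pair inequality (usually attributed to Katona, and different from Bollob\'as's original inductive argument): a uniformly random linear order on the ground set, the events $E_i$ that all of $A_i$ precedes all of $B_i$, the computation $\Pr[E_i]=\binom{a+b}{a}^{-1}$ using (i) and (ii), and the pairwise disjointness of the $E_i$ via the two applications of (iii) to the ordered pairs $(i,j)$ and $(j,i)$. All steps check out; in particular the elements $x\in A_i\cap B_j$ and $y\in A_j\cap B_i$ are automatically distinct by (ii), so the two precedence requirements are genuinely contradictory, and the final union bound $N\cdot\binom{a+b}{a}^{-1}\le 1$ gives the claim.
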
 

\begin{proof} [Proof of Theorem~\ref{thm:Forest}]
It is obvious that $AR_d(F) = e(F)$ if $F$ is a star or a matching with precisely two edges. Moreover, as noted in~\eqref{eq:lower}, $AR_d(F) \geq e(F) - 1$ holds for every graph $F$. Hence, in order to complete the proof, we need to show that if $F$ is a forest which is not a star or a matching with precisely two edges, then $AR_d(F) \leq e(F) - 1$. We will do so by induction on $r$, the number of connected components of $F$, i.e., the number of trees in the forest $F$.

We begin by addressing the base case $r=1$. Let $F$ be a tree on $k+2$ vertices which is not a star; note that $k \geq 2$. Let $G$ be a $k$-regular connected graph with girth at least $k+2$ which is not $k$-edge-colourable; such a graph exists by Proposition~\ref{prop::highGirthClass2}. 
According to Lemma~\ref{lem:same_colours}, any proper edge colouring of $G$ yields a rainbow copy of $F$, otherwise $G$ would be $k$-edge-colourable.

Now fix $r \geq 1$ and assume that $AR_d(F) \leq e(F) - 1$ holds for every forest $F$ which consists of $r$ trees and is not a star or a matching with precisely two edges. Let $F$ be a forest which consists of $r+1$ trees and is not a matching with precisely two edges. Choose some tree $T$ in $F$ and let $R$ be the forest obtained from $F$ by removing $T$. Let $\tilde{F}$ be a graph which is not a star and is obtained from $F$ by identifying some vertex of $T$ with some vertex of $R$. Such a graph $\tilde{F}$ exists since $F$ is not a matching with precisely two edges. Clearly $\tilde{F}$ is a forest which consists of $r$ trees. Since, moreover, it is not a star or a matching with precisely two edges, it follows by the induction hypothesis that there exists a graph $\tilde{G}$ with maximum degree at most $e(\tilde{F}) - 1 = e(F) - 1$ such that any proper edge colouring of $\tilde{G}$ yields a rainbow copy of $\tilde{F}$. Let $a$ denote the number of edges in $T$, let $b$ denote the number of edges in $R$, and let $N = \binom{a+b}{a} + 1$. Let $G$ be the union of $N$ pairwise vertex disjoint copies $G_1, G_2, \ldots, G_N$ of $\tilde{G}$. The maximum degree of $G$ is clearly at most $e(F) - 1$. Suppose for a contradiction that there exists a proper edge colouring of $G$ that contains no rainbow copy of $F$. For every $1 \leq i \leq N$, the graph $G_i$ contains a rainbow copy $F_i$ of $\tilde{F}$. Let $A_i$ and $B_i$ be the sets of colours of the edges of the $T$ part and the $R$ part of $F_i$, respectively. Surely $A_i \cap B_i = \emptyset$ holds for every $1 \leq i \leq N$ as $F_i$ is rainbow. On the other hand, for every $1 \leq i \neq j \leq N$ we must have $A_i \cap B_j \neq \emptyset$ as otherwise the $T$ part of $F_i$ and the $R$ part of $F_j$ would form a rainbow copy of $F$. We conclude that the sets $A_1, \ldots, A_N, B_1, \ldots, B_N$ satisfy the conditions of Theorem~\ref{thm::Bollobas} and thus $\binom{a+b}{a} + 1 = N \leq \binom{a+b}{a}$ which is obviously a contradiction.   
\end{proof}

\section{Degree anti-Ramsey numbers of cycles} \label{sec:cycles}

Our first goal in this section is to prove Theorem~\ref{thm:cycles}. Our proof uses a topological version of Hall's Theorem due to Aharoni, Berger and Meshulam. In order to state their theorem, we need the following terminology. The \emph{fractional width} $w^*(\mathcal{F})$ of a hypergraph $\mathcal{F}$ is defined to be the minimum of $\sum_{E \in \mathcal{F}} \lambda(E)$ over all functions $\lambda : \mathcal{F} \to [0, \infty)$ such that $\sum_{E \in \mathcal{F}} |E \cap F| \cdot \lambda(E) \geq 1$ holds for every $F \in \mathcal{F}$.

\begin{theorem} \label{theorem:ABM} \cite[Theorem 1.5] {ABM}
If $\{\mathcal{F}_i\}_{i=1}^k$ are hypergraphs satisfying $w^*\left(\bigcup_{i \in I} \mathcal{F}_i\right) > |I| - 1$ for every $I \subseteq [k]$, then $\{\mathcal{F}_i\}_{i=1}^k$ admits a system of disjoint representatives, i.e., pairwise disjoint sets $F_1, F_2, \ldots, F_k$ such that $F_i \in \mathcal{F}_i$ holds for every $1 \leq i \leq k$.
\end{theorem}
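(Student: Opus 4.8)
The plan is to obtain this statement by combining two ingredients: a topological Hall theorem of Aharoni and Haxell, which reduces the existence of a system of disjoint representatives to a connectivity condition on an associated simplicial complex, and a lower bound on that connectivity in terms of the fractional width $w^*$. Both ingredients are topological in nature, and the whole difficulty is concentrated in the second one.

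First I would translate the problem into topology. Given the hypergraphs $\mathcal{F}_1, \ldots, \mathcal{F}_k$, set $\mathcal{G} = \bigcup_{i \in [k]} \mathcal{F}_i$ and let $\mathcal{I}(\mathcal{G})$ be the simplicial complex whose vertices are the members of $\mathcal{G}$ (each labelled by a class it comes from) and whose faces are the sub-collections consisting of pairwise disjoint members. A system of disjoint representatives for $\{\mathcal{F}_i\}_{i=1}^k$ is then precisely a face of $\mathcal{I}(\mathcal{G})$ containing exactly one vertex of each label, that is, a full \emph{rainbow} $k$-face. The Aharoni--Haxell topological Hall theorem guarantees such a rainbow face provided that, for every $I \subseteq [k]$, the induced subcomplex on the vertices $\bigcup_{i \in I} \mathcal{F}_i$ — which is exactly $\mathcal{I}\left(\bigcup_{i \in I} \mathcal{F}_i\right)$ — satisfies $\eta\left(\mathcal{I}\left(\bigcup_{i \in I} \mathcal{F}_i\right)\right) \geq |I|$, where $\eta(\mathcal{C})$ denotes the topological connectivity of $\mathcal{C}$ (its connectivity plus $2$). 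Thus the entire task reduces to verifying this connectivity bound for every $I$.

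The heart of the argument, and the step I expect to be the main obstacle, is the inequality
$$
\eta\left(\mathcal{I}(\mathcal{F})\right) \geq w^*(\mathcal{F})
$$
valid for every hypergraph $\mathcal{F}$. Granting it, the hypothesis $w^*\left(\bigcup_{i \in I} \mathcal{F}_i\right) > |I| - 1$ together with the integrality of $\eta$ yields $\eta\left(\mathcal{I}\left(\bigcup_{i \in I} \mathcal{F}_i\right)\right) \geq |I|$ for every $I \subseteq [k]$, so the topological Hall theorem applies and produces the desired system of disjoint representatives; the two thresholds match exactly, which is what makes the reduction clean. To establish the connectivity inequality I would fix a weighting $\lambda : \mathcal{F} \to [0,\infty)$ realising $w^*(\mathcal{F})$ and use it to drive a homological argument on $\mathcal{I}(\mathcal{F})$: the covering constraints $\sum_{E \in \mathcal{F}} |E \cap F| \cdot \lambda(E) \geq 1$ for every member $F$ should guarantee that, in the link of any face, enough total weight is available to contract low-dimensional cycles, forcing the reduced homology of $\mathcal{I}(\mathcal{F})$ to vanish below dimension $w^*(\mathcal{F}) - 1$.

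This last point is where all the topology resides. Lower-bounding connectivity is typically carried out either through a nerve/Mayer--Vietoris decomposition or through a discrete Morse matching, and the delicate part is to convert the fractional covering condition encoded by $\lambda$ into such a scheme in a way that the bookkeeping of weights exactly tracks the dimension up to which homology is killed. Once this translation is set up correctly, the remaining steps — checking that the proposed matching (or cover) is acyclic, that unmatched cells occur only in high dimension, and that duplicated members across classes are handled by passing to labelled copies — are formal, and the theorem follows by feeding the resulting bound into the topological Hall machinery.
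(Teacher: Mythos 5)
First, be aware that the paper does not prove this statement at all: it is imported verbatim as Theorem 1.5 of Aharoni, Berger and Meshulam~\cite{ABM} and used as a black box in the proof of Theorem~\ref{thm:cycles}. So there is no in-paper proof to compare against; what you are proposing is a proof of the cited result itself.

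Your outline does reproduce the known architecture of the argument in~\cite{ABM}: reduce to a topological Hall theorem (a system of disjoint representatives exists once $\eta\bigl(\mathcal{I}\bigl(\bigcup_{i\in I}\mathcal{F}_i\bigr)\bigr) \geq |I|$ for every $I$), then establish the connectivity bound $\eta(\mathcal{I}(\mathcal{F})) \geq w^*(\mathcal{F})$; the rounding step ($\eta \geq w^* > |I|-1$ forces $\eta \geq |I|$ because $\eta$ takes integer or infinite values) is also correct, as is the remark about passing to labelled copies to handle members shared between classes. The genuine gap is that the second ingredient --- which, as you yourself say, is where the entire difficulty lives --- is not proved but only wished for. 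The sentence asserting that the covering constraints ``should guarantee that enough total weight is available to contract low-dimensional cycles'' is not an argument: you give no mechanism by which the fractional weights $\lambda(E)$ enter a discrete Morse matching or a nerve/Mayer--Vietoris induction, and no reason the homology-vanishing threshold comes out to exactly $w^*(\mathcal{F}) - 1$ rather than some weaker function of $w^*$. In~\cite{ABM} this inequality is the main content of the paper and is obtained by a rather different and substantially harder route: a Garland-type spectral argument bounding the homology of flag complexes in terms of Laplacian eigenvalues, transferred to the independence complex of the intersection structure via vector representations of graphs. Without either reproducing that argument or supplying a genuinely new proof of $\eta(\mathcal{I}(\mathcal{F})) \geq w^*(\mathcal{F})$, your proposal reduces the theorem to an unproved statement of essentially equal depth.
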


\begin{proof}[Proof of Theorem~\ref{thm:cycles}]
We will first consider even cycles, i.e., we will prove that $AR_d(C_{2k}) \leq 2(2k-1)$ holds for every $k \geq 2$. For all positive integers $k \geq 2$ and $d$, let $G_{2k,d}$ denote the graph with vertex set
$$
\{u_i : 1 \leq i \leq k\} \cup \{v_{i,j} : 1 \leq i \leq k, \; 1 \leq j \leq d\} \,,
$$ 
and edge set
$$
\{u_i v_{i,j} : 1 \leq i \leq k, \; 1 \leq j \leq d\} \cup \{v_{i,j} u_{(i\bmod k)+1} : 1 \leq i \leq k, \; 1 \leq j \leq d\} \,.
$$
Since the maximum degree of $G_{2k,d}$ is $2d$, it suffices to prove that, for every $d \geq 2k-1$, any proper edge colouring of $G_{2k,d}$ admits a rainbow copy of $C_{2k}$.  

Fix some $d \geq 2k-1$ and let $c$ be a proper edge colouring of $G_{2k,d}$. Consider $3$-uniform hypergraphs $\mathcal{F}_1, \mathcal{F}_2, \ldots, \mathcal{F}_k$ on the common set of vertices $\{v_{i,j} : 1 \leq i \leq k, \; 1 \leq j \leq d\} \cup \{c(e) : e \in E\left(G_{2k,d}\right)\}$. For every
$1 \leq i \leq k$ and $1 \leq j \leq d$, let $E_{i,j} = \{c(u_i v_{i,j}), v_{i,j}, c(v_{i,j} u_{(i \bmod k) + 1})\}$ and let $\mathcal{F}_i = \{E_{i,j} : 1 \leq j \leq d\}$. For every $I \subseteq [k]$, denote $\mathcal{F}_I := \bigcup_{i \in I} \mathcal{F}_i$. Since $c$ is a proper edge colouring of $G_{2k,d}$, for every $1 \leq i \leq k$ and every colour $\alpha$ we have $|\{e \in E\left(G_{2k,d}\right) : u_i \in e \textrm{ and } c(e) = \alpha\}| \leq 1$. It follows that $|\{F \in \mathcal{F}_I : \alpha \in F\}| \leq k$ holds for every $I \subseteq [k]$ and every colour $\alpha$. Therefore, for every $I \subseteq [k]$ and every $E_{i,j} \in \mathcal{F}_I$ we have
\begin{equation} \label{eq:sum_intersections_even}
\sum_{F \in \mathcal{F}_I} |F \cap E_{i,j}| = 1 + |\{F \in \mathcal{F}_I : c(u_i v_{i,j}) \in F\}| + |\{F \in \mathcal{F}_I : c(v_{i,j} u_{(i \bmod k) + 1}) \in F\}| \leq 2k+1.
\end{equation}

For every $I \subseteq [k]$, let $\lambda_I : \mathcal{F}_I \to [0,\infty)$ be a function such that $\sum_{E \in \mathcal{F}_I} |E \cap E_{i,j}| \cdot \lambda_I(E) \geq 1$ holds for every $E_{i,j} \in \mathcal{F}_I$. Then, it follows by~\eqref{eq:sum_intersections_even} that
\begin{gather*}
d|I| = |\mathcal{F}_I| \leq \sum_{F \in \mathcal{F}_I} \left(\sum_{E \in \mathcal{F}_I} |F \cap E| \cdot \lambda_I(E)\right) = \sum_{E \in \mathcal{F}_I} \left(\sum_{F \in \mathcal{F}_I} |F \cap E|\right) \lambda_I(E) \leq \left(2k+1\right) \sum_{E \in \mathcal{F}_I} \lambda_I(E).
\end{gather*}
Therefore, for every $I \subseteq [k]$ we have 
$$
w^*(\mathcal{F}_I) \geq \frac{d |I|}{2k+1} \geq \frac{(2k-1)|I|}{2k+1} = |I| - \frac{2|I|}{2k+1} > |I| - 1.
$$ 

It thus follows by Theorem~\ref{theorem:ABM} that $\{\mathcal{F}_i\}_{i=1}^k$ admits a system of disjoint representatives, i.e., pairwise disjoint sets $E_{1,j_1}, E_{2,j_2}, \ldots, E_{k,j_k}$. We conclude that $u_1 v_{1,j_1} u_2 v_{2,j_2} \ldots u_k v_{k,j_k} u_1$
form a rainbow copy of $C_{2k}$.

Next, we will consider odd cycles, i.e., we will prove that $AR_d(C_{2k-1}) \leq 2(2k+1)$ holds for every $k \geq 2$. For all positive integers $k \geq 2$ and $d$, let $G_{2k-1,d}$ denote the graph with vertex set
$$
\{u_i : 1 \leq i \leq k\} \cup \{v_{i,j} : 1 \leq i \leq k-1, \; 1 \leq j \leq d\} \,,
$$ 
and edge set
$$
\{u_i v_{i,j} : 1 \leq i \leq k-1, \; 1 \leq j \leq d\} \cup \{v_{i,j} u_{i+1} : 1 \leq i \leq k-1, \; 1 \leq j \leq d\} \cup \{u_k u_1\} \,.
$$
Since the maximum degree of $G_{2k-1,d}$ is $2d$, it suffices to prove that, for every $d \geq 2k+1$, any proper edge colouring of $G_{2k-1,d}$ admits a rainbow copy of $C_{2k-1}$. Fix some $d \geq 2k+1$, let $c$ be a proper edge colouring of $G_{2k-1,d}$, and let $\alpha = c(u_k u_1)$. Let $H$ denote the graph obtained from $G_{2k-1,d}$ by removing the edge $u_k u_1$ and every vertex $v_{i,j}$ for which $\alpha \in \{c(u_i v_{i,j}), c(v_{i,j} u_{i+1})\}$. 
Let $H'$ be obtained from $H$ by adding new vertices $w_1, w_2, \ldots, w_{d-2}$ and new edges $u_1 w_1, \ldots, u_1 w_{d-2}, w_1 u_k, \ldots, w_{d-2} u_k$. Extend $c$ arbitrarily to a proper edge colouring of $H'$. 
Since $d-2 \geq 2k-1$ and $G_{2k,d-2} \subseteq H'$, it follows that $H'$ admits a copy $u_1 v_{1,j_1} u_2 v_{2,j_2} \ldots v_{k-1, j_{k-1}} u_k w_s u_1$ of $C_{2k}$ which is rainbow under $c$. We conclude that $u_1 v_{1,j_1} u_2 v_{2,j_2} \ldots v_{k-1, j_{k-1}} u_k u_1$ is a rainbow copy of $C_{2k-1}$ in $G_{2k-1,d}$.   
\end{proof}

Our second goal in this section is to determine the degree anti-Ramsey number of $C_4$. 

\begin{proof}[Proof of Proposition \ref{propos:C4}]
It is easy to verify that every proper edge colouring of $K_{2,4}$ (note that $K_{2,4} \cong G_{4,2}$) yields a rainbow copy of $C_4$ and thus $AR_d(C_4) \leq 4$. Suppose for a contradiction that there exists a graph $G$ with maximum degree at most $3$ such that any proper edge colouring of $G$ yields a rainbow copy of $C_4$; let $G$ be an inclusion minimal such graph.

\begin{claim} \label{cl::minimalIs3regular}
$G$ is bridgeless and $3$-regular. 
\end{claim}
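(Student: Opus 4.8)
The plan is to show that an inclusion-minimal graph $G$ forcing a rainbow $C_4$ under every proper edge colouring must be both bridgeless and $3$-regular, given that its maximum degree is at most $3$. I would prove these two properties separately, in each case arguing by contradiction from minimality: if $G$ had a bridge, or a vertex of degree at most $2$, I would produce a strictly smaller graph (or a proper subgraph) that still forces a rainbow $C_4$, contradicting inclusion-minimality.

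First I would handle the degree condition. Since $\Delta(G) \le 3$, it suffices to rule out vertices of degree $0$, $1$, and $2$. Isolated vertices and leaves are easy: they lie in no copy of $C_4$, so deleting such a vertex yields a smaller graph with the same forcing property, contradicting minimality. For a vertex $v$ of degree exactly $2$, the key observation is that $v$ can participate in a rainbow $C_4$ only through the unique path of length $2$ through $v$ joining its two neighbours; I would argue that if $v$ has degree $2$, one can delete an edge incident to $v$ (or delete $v$ and reroute) and show that any rainbow $C_4$ using $v$ can be avoided, so that some proper subgraph already forces a rainbow $C_4$ — again contradicting minimality. This establishes $3$-regularity.

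Next, for bridgelessness, I would suppose $G$ has a bridge $e = xy$, so that $G \setminus e$ splits into two components $G_x \ni x$ and $G_y \ni y$. The crucial point is that no copy of $C_4$ in $G$ can use the bridge $e$, since a bridge lies on no cycle at all. Hence every copy of $C_4$ in $G$ lies entirely within $G_x$ or entirely within $G_y$. I would then take a proper edge colouring of $G$ that is rainbow-$C_4$-free on each side — building colourings of the two components independently, which I can do provided neither component alone forces a rainbow $C_4$. By minimality neither proper subgraph $G_x$ nor $G_y$ forces a rainbow $C_4$, so each admits a rainbow-$C_4$-free proper edge colouring, and these combine (after possibly relabelling colours so the palettes are disjoint, and colouring $e$ freshly) into a proper edge colouring of all of $G$ with no rainbow $C_4$, the desired contradiction.

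I expect the main obstacle to be the degree-$2$ case. Unlike isolated and degree-$1$ vertices, a degree-$2$ vertex genuinely can lie on a $C_4$, so I cannot simply delete it; the argument must carefully track how a proper colouring of the reduced graph extends back to $G$ without creating an unavoidable rainbow $C_4$, and must confirm that the reduction actually produces a valid smaller forcing graph rather than merely a smaller graph. Getting this reduction to respect properness of the edge colouring — so that the $\Delta \le 3$ constraint is preserved and no new rainbow $C_4$ is forced — is the delicate step; the bridge argument, by contrast, is clean because bridges are cycle-free and the two sides decouple completely.
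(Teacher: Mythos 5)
Your overall strategy matches the paper's: use inclusion-minimality to rule out bridges (every $C_4$ avoids a bridge, so the two sides decouple and their rainbow-$C_4$-free colourings combine) and to rule out vertices of degree less than $3$. The bridge argument and the degree-$0$/degree-$1$ cases are fine as you describe them.

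However, the degree-$2$ case --- which is the entire substance of this claim --- is not actually proved in your proposal; you state an intention (``delete an edge incident to $v$ (or delete $v$ and reroute) and show that any rainbow $C_4$ using $v$ can be avoided'') and then explicitly flag this as the delicate step without resolving it. The difficulty is concrete and unavoidable: let $u$ have degree $2$ with neighbours $v_1, v_2$. By minimality, $G \setminus \{u\}$ has a proper edge colouring $c$ with no rainbow $C_4$, and the natural move is to extend $c$ to the two new edges $u v_1, u v_2$ using fresh colours. But if $w \neq u$ is a common neighbour of $v_1$ and $v_2$, then $u v_1 w v_2 u$ is a $C_4$ whose two old edges $v_1 w$ and $v_2 w$ share the vertex $w$ and hence receive \emph{distinct} colours because $c$ is proper; together with two fresh colours on $u v_1$ and $u v_2$ this copy is automatically rainbow. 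So whenever $|N_G(v_1) \cap N_G(v_2)| \geq 2$ one cannot simply extend $c$: one must first \emph{modify} $c$ on $G \setminus \{u\}$ (without creating a rainbow $C_4$ there or violating properness) so that the extension can reuse an existing colour on $u v_1$ or $u v_2$ in a way that kills every $C_4$ through $u$. The paper does exactly this via a case analysis on $|N_G(v_1) \cap N_G(v_2)| \in \{1,2,3\}$, exploiting in each case the fact that the $C_4$'s of $G \setminus \{u\}$ through $v_1$ and the common neighbours are already non-rainbow to find a safe recolouring. Without this recolouring argument the claim is not established, so your proposal has a genuine gap at its central step.
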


\begin{proof}
By assumption $\Delta(G) \leq 3$. Moreover, it readily follows from the minimality of $G$ that it is bridgeless; in particular, $\delta(G) \geq 2$. It thus remains to prove that $\delta(G) = 3$. Suppose for a contradiction that there exists a vertex of degree $2$ in $G$. Let $u$ be such a vertex and let $v_1, v_2$ be its neighbours. It follows by the minimality of $G$ that $G \setminus \{u\}$ admits a proper edge colouring $c$ with no rainbow copy of $C_4$. In order to obtain a contradiction we will show that we can extend, and modify if needed, the colouring $c$ to a proper edge colouring of $G$ with no rainbow copy of $C_4$. Let $\alpha$ and $\beta$ be two colours not in $\{c(e) : e \in E(G \setminus \{u\})\}$. We distinguish between three cases according to the size of the common neighbourhood of $v_1$ and $v_2$ in $G$.
\begin{description}
\item [Case 1:] $N_G(v_1) \cap N_G(v_2) = \{u\}$. Set $c(u v_1) = \alpha$ and $c(u v_2) = \beta$.

\item [Case 2:] $|N_G(v_1) \cap N_G(v_2)| = 3$. Let $w_1$ and $w_2$ denote the common neighbours of $v_1$ and $v_2$ in $G \setminus \{u\}$. Since the restriction of $c$ to $G \setminus \{u\}$ contains no rainbow copy of $C_4$, there must exist an $i \in \{1,2\}$ such that $c(v_1 w_i) = c(v_2 w_{3-i})$. Change the colour of $v_1 w_{3-i}$ to $\alpha$, colour $u v_1$ by $c(v_2 w_i)$, and set $c(u v_2) = \alpha$.   

\item [Case 3:] $|N_G(v_1) \cap N_G(v_2)| = 2$. Let $w$ denote the unique common neighbour of $v_1$ and $v_2$ in $G \setminus \{u\}$. If there is no copy of $C_4$ in $G \setminus \{u\}$ containing the edge $v_1 w$, then change the colour of $v_1 w$ to $\alpha$ and set $c(u v_2) = \alpha$ and $c(u v_1) = \beta$. Assume then that there exists a copy $v_1 w z_1 z_2 v_1$ of $C_4$ in $G \setminus \{u\}$ containing the edge $v_1 w$; note that there is exactly one such copy. Since $G \setminus \{u\}$ contains no rainbow copy of $C_4$, either $c(v_1 w) = c(z_1 z_2)$ or $c(v_1 z_2) = c(w z_1)$. If $c(v_1 w) = c(z_1 z_2)$, then change the colour of $v_1 z_2$ to $\alpha$, colour $u v_1$ by $c(v_2 w)$, and set $c(u v_2) = \beta$. If $c(v_1 z_2) = c(w z_1)$, then change the colour of $v_1 w$ to $\alpha$ and set $c(u v_2) = \alpha$ and $c(u v_1) = \beta$.   
\end{description}  
\end{proof}
 
Since, by Claim~\ref{cl::minimalIs3regular}, $G$ is a bridgeless cubic graph, it follows by Petersen's Theorem~\cite{Petersen} that $G$ admits a perfect matching $M$. Let $H$ be the graph obtained from $G$ by removing the edges of $M$, then $H$ is the disjoint union of cycles.   

An edge $v_i v_{i+1}$ of an odd cycle $v_0 v_1 \ldots v_{2k} v_0$ of $H$ is said to be \emph{free} if $\{v_{i-2} v_{i+1}, v_{i-1} v_{i+2}, v_i v_{i+3}\} \cap M = \emptyset$, where addition is taken modulo $2k+1$. Equivalently, $v_i v_{i+1}$ is free if any copy of $C_4$ in $G$ which includes $v_i v_{i+1}$ must contain two edges of $M$.  

\begin{claim} \label{cl::OddCycleWithChords}
Every odd cycle of $H$ has at least one free edge.
\end{claim}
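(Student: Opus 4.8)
The plan is to reduce the claim to a short counting argument on $\mathbb{Z}_{2k+1}$ that exploits the oddness of the cycle length. First I would reinterpret which matching edges make a given cycle edge non-free. Fix the odd cycle $Z = v_0 v_1 \cdots v_{2k} v_0$ of $H$ and write $n = 2k+1$, with all indices taken modulo $n$. The three chords appearing in the definition of a free edge, namely $v_{i-2}v_{i+1}$, $v_{i-1}v_{i+2}$ and $v_i v_{i+3}$, all join two vertices of $Z$ whose indices differ by $3$. Conversely, a single matching chord $v_j v_{j+3} \in M$ is one of these three chords for exactly three edges: solving $v_j v_{j+3} = v_{i-2}v_{i+1}$, $v_{i-1}v_{i+2}$, $v_i v_{i+3}$ gives $i = j+2$, $i=j+1$, $i=j$ respectively. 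Hence such a chord makes \emph{non-free} precisely the three consecutive edges $v_j v_{j+1}$, $v_{j+1}v_{j+2}$, $v_{j+2}v_{j+3}$ lying under it, and these are the only edges it affects.

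Second, I would record the data of these chords as a subset of $\mathbb{Z}_n$. Let $S = \{ j \in \mathbb{Z}_n : v_j v_{j+3} \in M \}$; for $n \ge 5$ each such matching chord has a unique left endpoint, so $|S|$ equals the number of distance-$3$ matching chords inside $Z$. Since these chords are pairwise disjoint edges of $M$ contained among the $n = 2k+1$ vertices of $Z$, we get $2|S| \le 2k+1$, hence $|S| \le k$.

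Third comes the counting. Suppose, for contradiction, that every edge of $Z$ is non-free. By the first step this means every edge $v_i v_{i+1}$ is covered, i.e. $S \cap \{i-2, i-1, i\} \neq \emptyset$ for all $i$; equivalently every window of three consecutive vertices meets $S$, which is exactly the statement that all cyclic gaps between consecutive elements of $S$ are at most $3$. On the other hand, a gap equal to $3$ would mean two consecutive elements $s, s+3 \in S$, i.e. chords $v_s v_{s+3}$ and $v_{s+3}v_{s+6}$ both in $M$, which share the vertex $v_{s+3}$ and contradict that $M$ is a matching. Therefore every gap is at most $2$. Summing the gaps around $Z$ then gives $2k+1 = n = \sum_{\mathrm{gaps}} g \le 2|S| \le 2k$, a contradiction. (The degenerate case $S = \emptyset$ is immediate, since then no edge is covered at all.) Hence not every edge can be non-free, so $Z$ has at least one free edge.

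Finally, a word on where the difficulty lies. A naive double count — each of the $n$ non-free edges is witnessed by some distance-$3$ chord, and each chord witnesses three edges — only yields $3|S| \ge 2k+1$, which is consistent with $|S| \le k$ and so proves nothing. The crux is therefore to use the \emph{disjointness} of the matching chords, not merely their number: two chords covering adjacent triples of edges would have to share the vertex between those triples, which forces genuine gaps of size at most $2$ rather than the size-$3$ gaps a perfect tiling would require. The odd length $2k+1$ then rules out covering the whole cycle, and this parity obstruction is the heart of the argument.
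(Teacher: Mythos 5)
Your proof is correct and follows essentially the same route as the paper's: both consider the set of indices $j$ with $v_j v_{j+3} \in M$, bound its size by $k$ via the matching property, and use the fact that no two such indices can differ by exactly $3$ together with the odd length $2k+1$ to produce three consecutive indices outside the set, i.e.\ a free edge. The only difference is organizational --- the paper constructs the free edge directly from a consecutive pair of uncovered indices, whereas you argue by contradiction through a cyclic gap analysis.
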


\begin{proof}
Let $v_0 v_1 \ldots v_{2k} v_0$ be an odd cycle of $H$ (if no such cycle exists, then there is nothing to prove). Let $I = \{0 \leq i \leq 2k : v_i v_{i+3} \in M\}$, where addition is taken modulo $2k+1$. Since $M$ is a matching, it follows that $|\{i, i+3\} \cap I| \leq 1$ for every $0 \leq i \leq 2k$. Therefore $|I| \leq \left\lfloor \frac{2k+1}{2} \right\rfloor = k$ and thus there exists an $0 \leq i \leq 2k$ such that $\{i+1, i+2\} \cap I = \emptyset$. If $i\in I$, then $i+3\notin I$ and therefore the edge $v_{i+3} v_{i+4} $ is free. Otherwise, the edge $v_{i+2}v_{i+3}$ is free.
\end{proof}

The graph obtained from $H$ by removing one free edge from each odd cycle is clearly $2$-edge-colourable. We extend this colouring to a proper edge colouring $c$ of $G$ by colouring the removed free edges by a third colour and the edges of $M$ by a fourth colour. If $F$ is a copy of $C_4$ which is rainbow under $c$, then it must contain exactly one free edge and exactly one edge of $M$. However, no such copies exist by the definition of free edges. This contradicts our assumption that any proper edge colouring of $G$ admits a rainbow copy of $C_4$. 
\end{proof}

\section{Concluding remarks and open problems} \label{sec::open}

\noindent \textbf{Graphs $H$ for which $AR_d(H) = e(H) - 1$.} As noted in the introduction, $AR_d(H) \geq e(H) - 1$ for every graph $H$. In Theorem~\ref{thm:Forest} this simple lower bound is shown to be tight for every forest which is not a star nor a matching with precisely two edges. It would be interesting to characterize the family ${\mathcal F}$ of all graphs $H$ for which $AR_d(H) = e(H) - 1$. One should note that this family is quite rich. In particular, it is not hard to see that, for every graph $H$, there exists an integer $t_0$ such that, for every $t \geq t_0$, the graph $H_t$ which is the disjoint union of $H$ and a matching of size $t$ is in ${\mathcal F}$. Indeed, let $r = AR_d(H)$ and let $G$ be a graph with maximum degree $r$ such that any proper edge colouring of $G$ yields a rainbow copy of $H$. Let $t_0 =  r + 1 - e(H)$ and fix some $t \geq t_0$. For any $1 \leq i \leq t$ let $F_i$ be an arbitrary $(t + e(H) - 1)$-regular Class 2 graph and let $G_t$ be the pairwise vertex disjoint union of $F_1, \ldots, F_t$ and $G$. It is easy to see that any proper edge colouring of $G_t$ yields a rainbow copy of $H_t$ and thus $AR_d(H_t) = e(H_t) - 1$. 

In light of this example, it might be easier to study the sub-family ${\mathcal F}_C \subseteq {\mathcal F}$ of connected graphs $H$ for which $AR_d(H) = e(H) - 1$. Even in this case, we have a few examples of graphs in ${\mathcal F}_C$, apart from $C_3$ and trees which are not stars. Indeed, let $G_1$ be the triangle with a pendant edge (i.e. the graph with vertex set $\{x_1, x_2, x_3, x_4\}$ and edge set $\{x_1 x_2, x_2 x_3, x_1 x_3, x_3 x_4\}$) and let $G_2$ be the ``bull'' (i.e. the graph with vertex set $\{x_1, x_2, x_3, x_4, x_5\}$ and edge set $\{x_1 x_2, x_2 x_3, x_1 x_3, x_2 x_4, x_3 x_5\}$). It is not hard to verify that any proper edge colouring of the graph obtained from $K_4$ by subdividing one of its edges once yields a rainbow copy of $G_1$ and that any proper edge colouring of $K_5$ yields a rainbow copy of $G_2$.         

\bigskip

\noindent \textbf{Degree anti-Ramsey numbers of cycles.} It follows from the general lower bound~\eqref{eq:lower} that $AR_d(C_k) \geq k-1$ for every $k \geq 3$. This is tight for $k = 3$ but not for $k = 4$ as $AR_d(C_4) = 4$ by Proposition~\ref{propos:C4}. The latter result is the only non-trivial lower bound we have on the degree anti-Ramsey number of any cycle. It would be interesting to prove non-trivial lower bounds on $AR_d(C_k)$ for $k \geq 5$.   

A general non-trivial upper bound on $AR_d(C_k)$ is stated in Theorem~\ref{thm:cycles}. It is proved there that $AR_d(C_k) \leq 2(k-1)$ whenever $k$ is even and that $AR_d(C_k) \leq 2(k+2)$ whenever $k$ is odd. It would be interesting to determine $AR_d(C_k)$ for every $k \geq 5$ or at least to significantly reduce the gap between the lower and upper bounds.  

All the upper bounds on $AR_d(C_k)$ we proved in this paper were obtained by examining the proper edge colourings of the graphs $G_{k,d}$ which are defined in the proof of Theorem~\ref{thm:cycles}. 
We believe that finding the smallest $d = d(k)$ for which every proper edge colouring of $G_{k,d}$ yields a rainbow copy of $C_k$ is of independent interest and might be helpful in improving the upper bounds we currently have on $AR_d(C_k)$. It is not hard to verify that $d(3) = 1$, $d(4) = 2$ and $d(5) = 3$ (the latter shows, in particular, that $AR_d(C_5) \leq 6$). Moreover, it was shown in the proof of Theorem~\ref{thm:cycles} that $d(k) \leq k-1$ whenever $k$ is even and that $d(k) \leq k+2$ whenever $k$ is odd. On the other hand, for every positive integer $r$, the following edge colouring of $G_{4r,3(r-1)}$  shows that $d(4r) \geq 3r-2$: for every even $1 \leq i \leq 2r$ and $1 \leq j \leq 3(r-1)$, let 
\begin{align*}
c(u_i v_{i,j}) & = j, \\
c(v_{i,j} u_{(i \bmod 2r) + 1}) & =
\begin{cases}
j + 1 \quad \text{ if } j \bmod 3 \neq 0, \\
j - 2 \quad \text{ if } j \bmod 3 = 0,
\end{cases}
\end{align*}
and colour the remaining edges arbitrarily such that the resulting colouring is proper. Similarly, one can show that $d(4r+1) \geq 3r-1$, $d(4r+2) \geq 3r-2$ and $d(4r+3) \geq 3r+2$.  

Note that the upper bounds on $AR_d(C_k)$ we proved in Theorem~\ref{thm:cycles} entail upper bounds on the size anti-Ramsey numbers of cycles. Since $G_{k,d}$ has $k d$ edges whenever $k$ is even and $(k-1) d + 1$ edges whenever $k$ is odd, it follows that $AR_s(C_k) \leq k d(k) \leq k(k-1)$ whenever $k$ is even and $AR_s(C_k) \leq (k-1) d(k) + 1 \leq (k-1)(k+2) + 1$ whenever $k$ is odd. Thus, even a slight improvement of the upper bound on $d(k)$ would improve the upper bound $AR_s(C_k) \leq (k-1)^2 + 1$ for even $k$ and $AR_s(C_k) \leq (k-1)^2$ for odd $k$ which was proved in~\cite{AKSU}.

\bigskip

\noindent \textbf{Degree anti-Ramsey numbers of complete bipartite graphs.} For all integers $1 \leq s \leq t$, let $f(s,t)$ denote the smallest integer $n$ such that any proper edge colouring of $K_{n,n}$ yields a rainbow copy of $K_{s,t}$. Similarly, for all integers $1 \leq s \leq t$, let $g(s,t)$ denote the smallest integer $n$ such that any proper edge colouring of $K_{s,n}$ yields a rainbow copy of $K_{s,t}$. Clearly 
\begin{equation} \label{eq::completeBipartiteBounds}
AR_d(K_{s,t}) \leq f(s,t) \leq g(s,t).
\end{equation}

Note that, trivially, $AR_d(K_{1,t}) = f(1,t) = g(1,t)$ for every positive integer $t$ and thus both inequalities in~\eqref{eq::completeBipartiteBounds} are sharp in general. It was proved in~\cite{CCDS} that $g(s,t) \leq (s^2 - s + 1)(t-1) + 1$ and that this simple upper bound is tight whenever $s-1$ is a non-negative prime power. It was also shown in~\cite{CCDS} that every proper edge colouring of $K_{3,6}$ yields a rainbow copy of $K_{2,3}$. It is then easy to verify that $f(2,3) = 6 < 7 = g(2,3)$. This is the only example we currently have where the second inequality in~\eqref{eq::completeBipartiteBounds} is strict. On the other hand, in addition to the trivial case $s=1$, we can prove that $f(2,t) = 3t-2$ holds whenever $t - 1$ is a non-negative power of $3$, and thus $f(2,t) = g(2,t)$ for those values of $t$. Indeed, for any positive integer $r$, let $n = 3^{r-1} + 1$ and consider the following edge colouring of $K_{3^r,3^r}$. Viewing each part of $K_{3^r,3^r}$ as the vector space $\mathbb{Z}_3^r$, we colour an edge $uv$ with the colour $u - v \in \mathbb{Z}_3^r$. This is a proper edge colouring of $K_{3^r,3^r}$ since, if $u, v_1, v_2 \in \mathbb{Z}_3^r$ are such that $u v_1$ and $u v_2$ have the same colour, then $u - v_1 = u - v_2$ and thus $v_1 = v_2$. Moreover, fix some distinct $u_1, u_2 \in \mathbb{Z}_3^r$ and pairwise distinct $v_1, \ldots, v_n \in \mathbb{Z}_3^r$. Since $3 n > 3^r$, the sets $\{v_i : 1 \leq i \leq n\}$, $\{v_i + u_1 - u_2 : 1 \leq i \leq n\}$ and $\{v_i + 2(u_1 - u_2) : 1 \leq i \leq n\}$ cannot be pairwise disjoint. Hence, there are $1 \leq i,j \leq n$ such that $u_1 - v_i = u_2 - v_j$ and thus the corresponding copy of $K_{2,n}$ is not rainbow.   
     
As for the first inequality in~\eqref{eq::completeBipartiteBounds}, we know it is sharp for $s=1$ and for $s = t = 2$ (by Proposition~\ref{propos:C4}). We are not aware of any $s \leq t$ for which $AR_d(K_{s,t}) < f(s,t)$. It would be interesting to further study the relations between these three functions.  
          
\bigskip

\noindent \textbf{Computability of degree anti-Ramsey numbers.} In order to determine the local anti-Ramsey number $AR(H)$ of a given graph $H$ we can, in principle, use the following algorithm:
\begin{enumerate}
\item Set $n$ to be $v(H)$.
\item If every proper edge colouring of $K_n$ (viewed as a partition of $E(K_n)$) yields a rainbow copy of $H$, then output $AR(H) = n$. Otherwise increase $n$ by $1$ and repeat step 2.
\end{enumerate}
Since there are only finitely many graphs with a prescribed number of edges, we can use a similar algorithm to compute the size anti-Ramsey number of any graph. On the other hand, we do not know how to devise an algorithm for computing degree anti-Ramsey numbers (note that the number of graphs with a prescribed maximum degree is unbounded). It would be interesting to find such an algorithm (or to prove that no such algorithm exists, though this seems unlikely). In order to obtain an algorithm for computing degree anti-Ramsey numbers in the spirit of the algorithms mentioned above, it would be sufficient to bound in some reasonable manner, for every graph $H$, the minimal number of vertices or edges in a graph with maximum degree $AR_d(H)$ such that any proper colouring of its edges yields a rainbow copy of $H$.

\bigskip

\noindent \textbf{Non-monotonicity in the number of colours.} It seems plausible that if any proper edge colouring of some graph $G$ with $m \geq \chi'(G)$ colours yields a rainbow copy of some graph $H$, then the same is true for any proper edge colouring of $G$ with $m' > m$ colours. However, this intuition fails in general. Indeed, consider the graph $G = (V,E)$, where $V = \{v_0,v_1,v_2,v_3,v_4,v_5,v_6,v_7,w\}$ and $E = \{v_k  v_{(k+1)\bmod 8} : 0 \leq k \leq 7\} \cup \{v_{2k} w : 0 \leq k \leq 3\} \cup \{v_0 v_4, v_2 v_6\}$. It is easy to verify that the chromatic index of $G$ is $4$, that every proper edge colouring of $G$ with $4$ colours yields a rainbow copy of $C_4$ and that $G$ does admit a proper edge colouring with no rainbow copy of $C_4$.

It would be interesting to find more examples of this phenomenon. In particular, we pose the following two questions. Is there an infinite family of graphs ${\mathcal H}$ such that for any $H \in {\mathcal H}$ there exists a graph $G$ such that any proper edge colouring of $G$ with $\chi'(G)$ colours yields a rainbow copy of $H$, but there exists a proper edge colouring of $G$ with no rainbow copy of $H$? Is there a non-negative integer $k$ such that for all graphs $H$ and $G$, if any proper edge colouring of $G$ with at most $\chi'(G) + k$ colours yields a rainbow copy of $H$, then any proper edge colouring of $G$ yields a rainbow copy of $H$?

\end{document}